\theoremstyle{plain}
\newtheorem{thm}{Theorem}[section]
\newtheorem{lem}[thm]{Lemma}
\newtheorem{prop}[thm]{Proposition}
\newtheorem{cor}[thm]{Corollary}
\newtheorem{conj}[thm]{Conjecture}
\newtheorem{princ}[thm]{Principle}
\theoremstyle{definition}
\newtheorem{defn}{Definition}[section]
\begin{document}

\title{Freiman-Ruzsa-Type Theory For Small Doubling Constant}
\author{Hansheng Diao}

\maketitle
\begin{abstract}
In this paper, we study the linear structure of sets $A \subset
\mathbb{F}_2^n$ with doubling constant $\sigma(A)<2$, where
$\sigma(A):=\frac{|A+A|}{|A|}$.  In particular, we show that $A$ is
contained in a small affine subspace. We also show that $A$ can be
covered by at most four shifts of some subspace $V$ with $|V|\leq
|A|$. Finally, we classify all binary sets with small doubling
constant.
\end{abstract}

\section{Introduction and Statement of Results}
Let $A$ be a finite subset of an abelian group $G$, and define the
\emph{doubling constant} $\sigma (A)$ of  $A$ to be
 \[\sigma(A):= \frac{|A+A|}{|A|},\]where $A+A$ is the collection
of all sums $a+a'$ with $a,a'\in A$.

Suppose that $A$ is nearly closed under group addition in the sense that
$\sigma(A) \leq K$ for some small $K$, what can be said about the structure of $A$? It
is easy to see that such sets will  possess good linear structures. In
particular, they will be cosets of subspaces or large subsets of
them. In the 1970s, Freiman obtained the following celebrated
theorem for $G=\mathbb{Z}$~\cite{F1}:
\begin{thm}[Freiman]
If $A\subseteq \mathbb{Z}$ and if $|A+A| \leq K|A|$, then $A$ is
contained in a proper arithmetic progression of dimension $d$ and
size at most $s|A|$ such that $d$ and $s$ only depend on $K$.
\end{thm}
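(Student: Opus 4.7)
The plan is to follow the classical Ruzsa-style proof of Freiman's theorem, which proceeds through four main stages: iterated sumset bounds, a Freiman-isomorphism transfer to a cyclic model, Bogolyubov's lemma in the model, and geometry of numbers. Throughout, all constants are allowed to depend on $K$.

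First I would apply the Pl\"unnecke--Ruzsa inequality to upgrade the hypothesis $|A+A| \leq K|A|$ to bounds of the form $|nA - mA| \leq K^{n+m}|A|$ for all nonnegative integers $n, m$; in particular this controls $|8A - 8A|$. Next, using Ruzsa's modeling lemma, I would find a set $B \subseteq \mathbb{Z}/N\mathbb{Z}$ for some prime $N \leq C(K)|A|$ that is Freiman $8$-isomorphic to $A$. The construction is a random dilation of $A$ followed by reduction modulo $N$, and the sumset bound above, combined with a pigeonhole argument, guarantees that all additive relations up to order $8$ are preserved. The image $B$ then has density bounded below by $1/C(K)$ in $\mathbb{Z}/N\mathbb{Z}$.

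Working in this cyclic model, I would apply Bogolyubov's lemma: since $B$ has density $\delta \geq 1/C(K)$, the set $2B - 2B$ contains a Bohr set of rank at most $\delta^{-2}$ and radius $1/4$, hence of rank bounded by $k(K)$. An application of Minkowski's second theorem then locates a proper arithmetic progression $Q$ of dimension $d \leq k(K)$ and size at least $c(K)\, N$ inside this Bohr set. Pulling $Q$ back along the Freiman $8$-isomorphism yields a proper progression $P \subseteq 2A - 2A$ in $\mathbb{Z}$ of size at least $c'(K)|A|$.

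It remains to cover $A$ by translates of $P$. Since $P \subseteq 2A - 2A$, the Pl\"unnecke inequality gives $|A + P| \leq |3A - 2A| \leq K^5 |A| \leq C(K)\, |P|$, so the Ruzsa covering lemma shows that $A \subseteq X + (P - P)$ for some set $X$ with $|X| \leq C(K)$. Absorbing $X$ into the dimension of $P - P$ enlarges the ambient progression by at most $\lceil \log_2 |X| \rceil \leq C(K)$ new directions, producing a proper arithmetic progression of dimension and size bounded only in terms of $K$ that contains $A$. The main obstacle, and the notorious source of the poor quantitative bounds in this proof, is the Bogolyubov/geometry-of-numbers passage: controlling the dimension $d$ and the size factor $s$ sharply as functions of $K$ is delicate, although for the qualitative statement given here any finite bound at each stage suffices.
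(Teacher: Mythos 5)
The paper does not prove this theorem; Theorem 1.1 is cited from Freiman's monograph~\cite{F1} as classical background, and no proof appears in the text. Your proposal therefore cannot be compared to a proof in the paper, but it can be evaluated on its own terms: it is an accurate outline of the standard modern proof due to Ruzsa, which differs from Freiman's original (and considerably more involved) geometric-combinatorial argument, and is the route taken in Tao--Vu~\cite{T}. The chain Pl\"unnecke--Ruzsa $\Rightarrow$ Ruzsa modeling lemma $\Rightarrow$ Bogolyubov $\Rightarrow$ Minkowski's second theorem $\Rightarrow$ Ruzsa covering is correct and each step does what you claim. Two small imprecisions are worth flagging. First, the modeling lemma does not give a Freiman $8$-isomorphism of $A$ itself onto a subset of $\mathbb{Z}/N\mathbb{Z}$; it gives one for a large subset $A' \subseteq A$ with $|A'| \geq |A|/8$, and the resulting progression $P$ lives in $2A'-2A'$, not $2A-2A$. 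This is harmless because the Pl\"unnecke bound controls $|A + P| \le |A + 2A' - 2A'| \le |5A-4A| \le K^9|A|$, so the covering step still closes the argument, but the intermediate claim as written is too strong. Second, the final absorption of $X$ into the progression cannot generically be done with only $\lceil \log_2|X|\rceil$ new directions: the elements of $X$ are arbitrary integers, not binary combinations of a small set of base points, so the standard argument adds $|X|$ new directions (one per translate), giving dimension $d + |X|$ and a size penalty of $2^{|X|}$. Since $|X| \le C(K)$ this still yields bounds depending only on $K$, so the theorem survives, but the stated bookkeeping is incorrect. Apart from these, your sketch is sound, and you are right that the poor explicit dependence on $K$ enters through the Bogolyubov/geometry-of-numbers stage.
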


In general, B. Green and I. Z. Ruzsa proved similar results for
arbitrary abelian groups~\cite{G2}. The precise statement
in~\cite{G2} is the following:
\begin{thm}[B. Green and I. Z. Ruzsa]
Let $A\subseteq G$ satisfy $|A+A| \leq K|A|$. Then $A$ is contained
in a coset progression of dimension $d(K)$ and size at most
$f(K)|A|$. One may take $d(K)=CK^4\textrm{log }^2(K+2)$ and
$f(K)=\textrm{exp }(CK^4\textrm{log}^2(K+2))$ for some absolute
constant $C$.
\end{thm}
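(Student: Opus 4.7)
The plan is to follow the Fourier-analytic ``Bogolyubov--Chang--Ruzsa'' route: locate a large coset progression inside $2A-2A$ via spectral methods, and then cover $A$ itself by a few translates of this progression using Ruzsa's covering lemma.

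First, I would invoke the Pl\"unnecke--Ruzsa inequality to get $|nA-mA|\leq K^{n+m}|A|$ for all $n,m\geq 0$, so that iterated sumsets of $A$ stay polynomially controlled in $K$. To actually run Fourier analysis I would then apply Ruzsa's modeling lemma, embedding a large piece of $8A$ into a finite abelian group $H$ in which it has density $\gtrsim K^{-O(1)}$ and all relevant Freiman homomorphisms are preserved. Bogolyubov's lemma, applied inside $H$, shows that $2A-2A$ contains a Bohr set $B(\Gamma,\rho)$, where $\Gamma\subseteq\widehat{H}$ is the ``large spectrum'' $\{\gamma : |\widehat{1_A}(\gamma)|\geq \alpha|A|\}$ for a threshold $\alpha\sim 1/K$.

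Next, Chang's theorem bounds the dissociated dimension of $\Gamma$ by $O(K^2\log K)$. In a group of bounded torsion (such as $\mathbb{F}_2^n$) this already delivers a large subspace inside $2A-2A$; in a general abelian group one needs an additional geometry-of-numbers argument (Minkowski's second theorem on a lattice attached to $\Gamma$) to extract from $B(\Gamma,\rho)$ an honest coset progression $P$ of rank $d=O(K^2\log K)$ with $|P|\geq (cK)^{-d}|A|$. Having $P\subseteq 2A-2A$ and $|A+P|\leq |3A-2A|\leq K^5|A|$, Ruzsa's covering lemma produces a set $T$ of size $|T|\leq K^5|A|/|P|$ with $A\subseteq T+P-P$. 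Absorbing $T$ into $\log_2|T|$ additional progression directions and replacing $P-P$ by a coset progression of the same rank and at most $2^d$ times the size yields the claimed bounds $d(K)=O(K^4\log^2(K+2))$ and $f(K)=\exp(O(K^4\log^2(K+2)))$.

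The main obstacle is the middle step: converting the abstract Bohr set produced by Bogolyubov's lemma into a concrete coset progression in a general abelian group. Chang's theorem only controls $|\Gamma|$ up to $O(K^2\log K)$, but the Minkowski-type extraction introduces a further loss that effectively squares this quantity, which is what accounts for the $K^4\log^2(K+2)$ rather than the more optimistic $K^2\log K$ one might naively hope for. Tracking these losses carefully, handling mixed torsion in $G$, and optimizing the density threshold $\alpha$ in the definition of the large spectrum is where the bulk of the quantitative work lies, and is what separates this theorem from the cleaner bounded-torsion or integer cases.
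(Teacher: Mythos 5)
The paper does not actually prove this statement; Theorem~1.2 is quoted verbatim from Green and Ruzsa~\cite{G2} as background, and the paper's own contributions concern $\mathbb{F}_2^n$ with doubling below~$2$, which never touches the general-abelian-group machinery you describe. So there is no proof in the paper to compare against.

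Judged on its own terms, your outline does reproduce the skeleton of the Green--Ruzsa argument: Pl\"unnecke--Ruzsa to control iterated sumsets, Ruzsa modeling into a finite group where Fourier analysis is available, Bogolyubov to place a Bohr set $B(\Gamma,\rho)$ inside $2A-2A$, Chang's theorem to bound the rank of $\Gamma$, a Minkowski second-theorem (geometry-of-numbers) step to convert the Bohr set into a genuine coset progression, and Ruzsa covering to pull the structure back onto $A$. The step you flag as the main obstacle --- turning the Bohr set into a coset progression in a group with mixed torsion, and accounting for the losses that inflate $K^2\log K$ to $K^4\log^2(K+2)$ --- is indeed where the real work sits; your sketch waves at it rather than carries it out, but that is the one place a full proof would need substantial additional detail (splitting $G$ into a free part and a bounded-torsion part, running the lattice argument on the free part, and being careful with how Chang's bound and the Minkowski extraction compound). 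As a high-level proposal it is on the right track; as written it is not yet a proof.
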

 As we see in the statement of this theorem, the analogue of arithmetic
progression in an arbitrary abelian group is \emph{coset
progression}. By a \emph{coset progression} of dimension $d$ we mean
a subset of $G$ of the form $P+H$, where $H$ is a subgroup of $G$,
and $P$ is a proper progression of dimension $d$.

The theorem above gives the best bound known for an arbitrary group.
However, better estimates can be derived for specific abelian groups. We consider the case where $G=\mathbb{F}_2^n$, which is particularly interesting because of its usefulness for theoretical computer science. See~\cite{C,F2,Z} for more information.\\

To study the structure of small doubling subset of $\mathbb{F}_2^n$,
B. Green and T. Tao introduced the following definition of $F(K)$
and $G(K)$~\cite{G1}.
\begin{defn}
Define $F(K)$ to be the least positive constant such that for any $m
\in \mathbb{Z}^+$ and any non-empty set $A \subseteq \mathbb{F}^m_2$
with $\sigma(A) \leq K$, $A$ is contained in an affine subspace $V
\subseteq \mathbb{F}^m_2$ of cardinality $|V| \leq F(K) |A|$.
\end{defn}

\begin{defn}
Define $G(K)$ to be the least positive integer such that for any $m
\in \mathbb{Z}^+$ and any non-empty set $A \subseteq \mathbb{F}^m_2$
with $\sigma(A) \leq K$, there exists a linear subspace $V \subseteq
\mathbb{F}_2^m$ of cardinality $|V| \leq |A|$ such that $A$ is
covered by at most $G(K)$ translates of $V$.
\end{defn}

In the past five years, the bounds for the value of $F(K)$ and
$G(K)$ have been continually improved. Some of the best results so
far are listed below.

\begin{thm}[B. Green and T. Tao~\cite{G1}]
\[F(K)=2^{2K+O(\sqrt{K}\textrm{log }K)}.\]
\end{thm}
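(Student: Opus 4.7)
The plan is to exploit the fact that in $\mathbb{F}_2^m$ every element has order two, so iterated sumsets are nested and must stabilize to the affine hull of $A$. After translating so that $0\in A$, let $V=\langle A\rangle$; then $V=\bigcup_{k\geq 1}kA$, and since $kA\subseteq (k+1)A$, there is a least integer $k^\ast\geq 1$ with $|k^\ast A|=|(k^\ast{+}1)A|$, for which $V=k^\ast A$. The theorem will follow if I can bound $|k^\ast A|/|A|\leq 2^{2K+O(\sqrt{K}\log K)}$.

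For this I would combine two opposing estimates. The first is the Plünnecke--Ruzsa inequality $|kA|\leq K^{k}|A|$, which on its own yields $|V|\leq K^{k^\ast}|A|$; this is a usable upper bound provided $k^\ast$ itself is controlled. The second is a matching lower bound on $|k^\ast A|$ coming from strict monotonicity of the iterated sumset sequence: because the ratios $r_k:=|(k+1)A|/|kA|$ obey a non-increasing behavior (a standard consequence of the Plünnecke--Ruzsa framework) and are all strictly greater than $1$ for $k<k^\ast$, a telescoping argument forces $|k^\ast A|$ to grow at a controlled rate in $k^\ast$. Comparing this lower bound with the upper bound $K^{k^\ast}|A|$ controls $k^\ast$, and balancing the two estimates pins $k^\ast$ at roughly $2K/\log_2 K$, which when inserted into $|V|\leq K^{k^\ast}|A|$ produces the desired main term $2^{2K}$.

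The main obstacle is obtaining the sharp leading constant $2$ in the exponent rather than some larger value; a naive iteration of Ruzsa's covering lemma yields bounds of the shape $2^{CK}$ with $C>2$, and bringing $C$ down to exactly $2$ requires both a fractional or averaged form of Plünnecke--Ruzsa and the special symmetry $A+A=A-A$ available only in characteristic two, which replaces factors of $K^2$ by factors of $K$ at each stage. The lower-order correction $O(\sqrt{K}\log K)$ would arise from balancing a free parameter in a Ruzsa covering argument: one covers $V$ by translates of $kA$ at an intermediate stage $k\sim\sqrt{K}$, trading the number of translates (of order $K^{O(\sqrt{K})}$) against the size $|kA|\leq K^{k}|A|$ of each translate. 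This parameter balancing is the delicate part of the argument, and the precise constants emerge from the resulting extremal calculation, which I would leave for the detailed proof.
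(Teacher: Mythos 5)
This theorem is quoted in the paper without proof; it is cited from Green and Tao's paper \emph{Freiman's theorem in finite fields via extremal set theory}, so there is no in-paper argument to compare against. That said, your proposed route does not match Green and Tao's actual method, and it contains a genuine gap. Green and Tao do not argue via iterated sumsets and Pl\"unnecke--Ruzsa at all (as their title advertises, they first pass, via a compression argument, to a down-closed subset of $\{0,1\}^n$, and then invoke Kruskal--Katona-type extremal estimates to compare $|A|$, $|A+A|$, and the size of the affine span); the route you sketch is precisely the kind of covering/iteration argument they explain is \emph{insufficient} to reach the sharp exponent $2K$.

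The concrete gap in your sketch is the step that ``controls $k^\ast$.'' You assert that because the ratios $r_k = |(k+1)A|/|kA|$ are non-increasing and strictly greater than $1$ for $k<k^\ast$, a telescoping argument pins $k^\ast$ at roughly $2K/\log_2 K$. This does not follow. Even granting a monotonicity statement for the $r_k$ (which itself is not a standard consequence of Pl\"unnecke--Ruzsa in the form you invoke), the only lower bound that strict inequality plus integrality gives you is $|k^\ast A| \geq |A| + (k^\ast - 1)$, which is linear in $k^\ast$ and hopelessly far from matching the exponential upper bound $K^{k^\ast}|A|$. Nothing in the argument prevents $k^\ast$ from being of size comparable to $n$ (e.g.\ when $A$ is a thin Hamming ball, the iterated sumsets grow by small increments for many steps), in which case $K^{k^\ast}$ is astronomically larger than $2^{2K}$. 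To make an iterated-sumset approach work you would need a nontrivial quantitative stability theorem bounding $k^\ast$ purely in terms of $K$, and no such ingredient is supplied. Without it, the ``balancing'' in your final paragraph has nothing to balance against, and the claimed $2^{2K+O(\sqrt{K}\log K)}$ cannot be extracted.
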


\begin{thm}[{A quick result follows from [7, Corollary 1.5] and Ruzsa's covering lemma [10, Lemma
2.14]}]
\[G(K)\ll K^{O(K)}.\]
\end{thm}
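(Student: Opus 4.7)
The plan is to combine a Bogolyubov--Ruzsa-type density statement (which is what [7, Corollary 1.5] should provide) with Ruzsa's covering lemma [10, Lemma 2.14] in the standard way, together with a routine application of the Pl\"unnecke--Ruzsa sumset inequality. The guiding picture is this: one first locates a linear subspace $V$ sitting inside a bounded iterated sumset of $A$ and having density at least $K^{-O(K)}$ of $|A|$, and then uses the covering lemma to conclude that $A$ itself is covered by $K^{O(K)}$ translates of $V$.

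In more detail, I would first apply [7, Corollary 1.5] to produce a subspace $V \subseteq \mathbb{F}_2^m$ with $V \subseteq sA$ for some small absolute constant $s$ (where $sA := A+\cdots+A$), satisfying the density bound $|V| \geq |A| \cdot K^{-O(K)}$. To meet the requirement $|V| \leq |A|$ from the definition of $G(K)$, I would, if necessary, replace $V$ by a subspace $V' \subseteq V$ of codimension $r$, with $r$ chosen minimal so that $|V'| \leq |A|$. Minimality forces $\tfrac{1}{2}|A| \leq |V'| \leq |A|$, so the density bound $|V'| \geq |A| \cdot K^{-O(K)}$ is preserved up to an absolute constant. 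Relabel $V' \to V$.

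With $V$ in hand, the estimate $|A + V| \leq K^{O(K)} |V|$ follows in one line: since $V \subseteq sA$ we have $A + V \subseteq (s+1)A$, and Pl\"unnecke--Ruzsa gives $|(s+1)A| \leq K^{s+1}|A|$; dividing by the lower bound on $|V|$ yields the claimed $K^{O(K)}$ ratio. Ruzsa's covering lemma [10, Lemma 2.14] then produces a set $X$ with $|X| \leq K^{O(K)}$ such that $A \subseteq X + V - V$. Since $V$ is a linear subspace of $\mathbb{F}_2^m$, we have $V - V = V$, so $A$ is covered by at most $K^{O(K)}$ translates of $V$, proving $G(K) \ll K^{O(K)}$.

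The one nontrivial step is invoking [7, Corollary 1.5] correctly to extract the subspace $V$ with the desired density; everything else---arranging $|V| \leq |A|$, the Pl\"unnecke--Ruzsa estimate, and the covering step---is standard bookkeeping, which is why the theorem is described as a ``quick'' consequence.
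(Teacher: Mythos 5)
The paper gives no proof for this theorem; it simply records it as a known consequence of [7, Corollary 1.5] together with Ruzsa's covering lemma [10, Lemma 2.14]. Your reconstruction uses precisely those two ingredients in the standard way (supplemented by Pl\"unnecke--Ruzsa, which is routine and also in [10]): extract a subspace $V$ inside a bounded sumset with $|V|\geq |A|\cdot K^{-O(K)}$, trim it below $|A|$ without losing more than a factor of~$2$, bound $|A+V|/|V|$ by $K^{O(K)}$, and cover. This is correct, and it is the intended argument; the only thing you are implicitly relying on, and sensibly flag yourself, is that [7, Corollary 1.5] delivers a Chang/Bogolyubov--Ruzsa-type density of order $K^{-O(K)}$ rather than something weaker, which it does.
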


\begin{thm}[Deshouillers, Hennecart and Plagne~\cite{D}]
For $1\leq K <4$, \[F(K) \leq \frac{2K-1}{3K-1-K^2}.\]
\end{thm}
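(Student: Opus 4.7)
The plan is to bound $|V|/|A|$ where $V$ is the smallest affine subspace containing $A$. By translating $A$, we may assume $0 \in A$, so that $V := \operatorname{span}(A)$ is a linear subspace; write $N = |A|$ and $M = |V|$. The desired inequality rearranges as $(3K - 1 - K^2)\,M \leq (2K-1)\,N$, which is nontrivial precisely when $K$ lies below $(3+\sqrt{5})/2$, the positive root of $K^2 - 3K + 1$. The appearance of this characteristic polynomial of a Fibonacci-like recursion is a strong hint that the bound will arise from an induction whose step is governed by this polynomial.

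The core structural ingredient is elementary: whenever $v \in V \setminus (A+A)$, the translate $A + v$ is disjoint from $A$, since any intersection would realize $v$ as $a + a'$ with $a, a' \in A$; more generally, $A + v_1$ and $A + v_2$ are disjoint whenever $v_1 + v_2 \notin A+A$. Packing such disjoint translates inside $V$ yields one quantitative inequality among $M$, $N$, and $|A+A| \leq KN$. A complementary bound comes from Ruzsa's covering lemma: since $|A+A| \leq K|A|$, there exists $S \subseteq A$ with $|S| \leq K$ and $A \subseteq (A+A) + S$, so $V$ itself can be covered by a controlled number of translates of $A+A$. Balancing these two constraints produces a rational function in $K$ as an upper bound for $M/N$.

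A cleaner route, which I expect actually produces the sharp coefficient, is induction on $\dim V$: choose a coordinate hyperplane $H$ and split $A = A_0 \cup A_1$ with $A_0 = A \cap H$ and $A_1 \subseteq V \setminus H$. One then decomposes
\[
    A+A \;=\; (A_0 + A_0) \cup (A_1 + A_1) \cup (A_0 + A_1),
\]
where the first two sets lie in $H$ and the third in its opposite coset. Applying the inductive hypothesis to each slice and combining with the trivial estimate $|A_0 + A_1| \geq \max(|A_0|, |A_1|)$ gives a recursion on $M/N$ whose characteristic equation is exactly $K^2 - 3K + 1 = 0$, pinning down the constant $(2K-1)/(3K-1-K^2)$. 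The main obstacle is the optimization: any naive use of Pl\"unnecke--Ruzsa alone yields only $F(K) \ll K^{O(1)}$, whereas matching the extremal constant requires choosing the slicing hyperplane so as to equalize the doubling defects on the two halves and carefully tracking how the deficit $|A+A| - K|A|$ propagates through the induction. The base case, where $\dim V$ is small enough that $A$ is forced to fill $V$ entirely, serves as the anchor of the recursion.
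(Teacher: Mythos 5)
This is a cited result (Theorem 1.6, attributed to Deshouillers--Hennecart--Plagne~\cite{D}); the paper itself gives no proof of it, so there is no internal argument to compare against. That said, your proposal does not constitute a proof either: it is an outline with the decisive step missing.

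The two structural ideas you record---disjointness of $A$ and $A+v$ when $v\notin A+A$, and the hyperplane slicing $A=A_0\cup A_1$ with $A+A=(A_0+A_0)\cup(A_1+A_1)\cup(A_0+A_1)$---are both sound and are indeed the raw material for an argument of this flavour. But the sentence ``Applying the inductive hypothesis to each slice and combining with the trivial estimate $|A_0+A_1|\geq\max(|A_0|,|A_1|)$ gives a recursion on $M/N$ whose characteristic equation is exactly $K^2-3K+1=0$'' is precisely the claim that needs to be proved, and it cannot be reached by the ingredients as stated. To invoke an inductive hypothesis on $A_0$ (or $A_1$) you must first control the doubling constant of that slice, and nothing in your setup bounds $|A_0+A_0|/|A_0|$ or $|A_1+A_1|/|A_1|$ in terms of $K$. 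Worse, $|A_0+A_1|\geq\max(|A_0|,|A_1|)$ is far too weak on its own: combined with $|A_0+A_0|+|A_1+A_1|\geq |A_0|+|A_1|-O(1)$ it gives only $|A+A|\gtrsim \tfrac32|A|$, with no mechanism to force $\dim V$ down. Your own paragraph flags this (``the main obstacle is the optimization...equalize the doubling defects on the two halves...track how the deficit propagates'') and then does not resolve it; that is where the actual content of the DHP theorem lives. So the plan is plausible in outline but the recursion, the choice of slicing hyperplane, and the base case are all asserted rather than established.

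One correct observation worth keeping: you note the bound is vacuous unless $3K-1-K^2>0$, i.e.\ $K<\tfrac{3+\sqrt 5}{2}\approx 2.618$. The paper's stated range ``$1\leq K<4$'' is therefore inconsistent with the displayed formula as written, and any completed proof would have to be formulated on the smaller range (or the formula itself corrected). Flag this explicitly if you pursue the argument; as stated it signals that the target statement should be checked against~\cite{D} before attempting to prove it.
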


In~\cite{G1}, the authors give the following formulae for $F(K)$ and
$G(K)$ in the region $1 \leq K < \frac{9}{5}$:
\begin{prop}[B. Green and T. Tao~\cite{G1}]
\[F(K)=\left\{\begin{array}{ll}
K & 1 \leq K < \frac{7}{4};\\
\frac{8}{7}K &  \frac{7}{4} \leq K < \frac{9}{5}.\end{array}\right.
\]
\end{prop}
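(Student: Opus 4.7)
My plan splits into the upper and lower bounds. After translating so $0\in A$, the affine hull $V$ of $A$ coincides with the linear span, and $A \subseteq A+A \subseteq V$. The structural claim driving the upper bound is: \emph{if $\sigma(A) < \tfrac{7}{4}$ then $A+A = V$}. Granting this, for $K\in[1,\tfrac{7}{4})$ we get $|V|/|A| = \sigma(A) \leq K$ immediately, proving $F(K) \leq K$. To prove the claim, I suppose $v\in V\setminus(A+A)$; then $A$ and $A+v$ are disjoint in $V$, forcing $|V|\geq 2|A|$. Passing to the quotient by $H := \operatorname{Stab}(A)$ preserves $\sigma(A)$ and trivialises the stabiliser, and in the reduced setting the case $|A|=|V|/2$ gives $A+t = V\setminus A$ for every $t\in V\setminus(A+A)$, so any two such $t,t'$ agree, yielding $|V\setminus(A+A)|\leq 1$ and $\sigma(A) \geq (|V|-1)/(|V|/2) = 2 - 2/|V| \geq \tfrac{7}{4}$ once $|V|\geq 8$ (smaller $|V|$ is ruled out by $A$ spanning $V$). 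The case $|A| < |V|/2$ is handled by further structural analysis --- using that $A+A$ contains the pairwise sums of any basis of $V$ contained in $A$ --- to produce the same lower bound on $\sigma(A)$.

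For $K\in[\tfrac{7}{4}, \tfrac{9}{5})$ the same dichotomy applies. If $A+A = V$ then $|V|/|A| = \sigma(A) \leq \tfrac{8}{7}K$; otherwise I establish the refinement $|V\setminus(A+A)| \leq |A+A|/7$, which gives $|V|/|A| \leq \tfrac{8}{7}\sigma(A)$. The constant $\tfrac{1}{7}$ traces to the minimal extremiser $\{0,e_1,e_2,e_3\}\subseteq\mathbb{F}_2^3$, for which $|A+A|=7$ and $|V\setminus(A+A)|=1$; the refinement follows by the same stabiliser reduction, now combined with a case analysis ruling out quotient configurations whose relative deficit exceeds $\tfrac{1}{7}$ once $\sigma < \tfrac{9}{5}$.

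For the lower bound I take $A = V_1\cup S$ with $V_1$ a linear subspace, $x\notin V_1$, and $\emptyset \neq S\subseteq V_1+x$; then $A+A = V_1\cup(V_1+x) = \operatorname{span}(A)$ and $\sigma(A) = |V|/|A| = 2|V_1|/(|V_1|+|S|)$ ranges densely over $[1,2)$, establishing $F(K)\geq K$ on $[1,\tfrac{7}{4})$. The benchmark $\{0,e_1,e_2,e_3\}\subseteq\mathbb{F}_2^3$ achieves $\sigma=\tfrac{7}{4}$ with $|V|/|A|=2 = \tfrac{8}{7}\cdot\tfrac{7}{4}$, and families obtained by taking direct products with subspaces and appending carefully chosen cosets give sharp examples throughout $[\tfrac{7}{4},\tfrac{9}{5})$. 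The main obstacle lies in the refined case analysis in the second upper-bound regime: while the stabiliser reduction is clean, eliminating all extremal configurations other than the $\mathbb{F}_2^3$-obstruction --- and thereby pinning down $\tfrac{8}{7}$ as the correct constant --- requires delicate small-dimension bookkeeping.
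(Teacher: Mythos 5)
The proposition is quoted from Green and Tao~\cite{G1} and is not re-proved directly in the paper; instead the paper recovers (and extends) it through Proposition 3.1 and Theorem 5.2, whose engine is Kneser's theorem applied to the stabiliser $H$ of $A+A$, followed by the Kemperman/Lev classification of sets $B$ with $|B+B|=2|B|-1$. Your proposal aims at a more elementary argument but has genuine gaps at precisely the places where the structural content lives. You quotient by $\operatorname{Stab}(A)$ rather than $\operatorname{Stab}(A+A)$; these differ, and after your reduction $\operatorname{Stab}(A+A)$ can still be large (e.g.\ $A=\{0,e_1,e_2\}$ has trivial $\operatorname{Stab}(A)$ but $A+A=\{0,e_1,e_2,e_1+e_2\}$ is its own stabiliser), so the clean uniqueness-of-$t$ trick you use when $|A|=|V|/2$ has no analogue in the case $|A|<|V|/2$. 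Your promised ``further structural analysis'' via pairwise sums of a basis is exactly the hard step and is not supplied; the paper's route instead uses Kneser ($|B+B|\geq 2|B|-|\operatorname{Stab}(B+B)|$) to force the reduced set $B=(A+H)/H$ to satisfy $|B+B|=2|B|-1$ and then classifies such $B$ via Lev's theorem.

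The same issue recurs, amplified, in the $[\tfrac74,\tfrac95)$ regime. Your claimed refinement $|V\setminus(A+A)|\leq |A+A|/7$ is exactly what must be proved, and the ``delicate small-dimension bookkeeping'' you defer is the entire content; in the paper's framework it is the assertion that the only normal set $B$ with $4\leq|B|\leq 7$ has $|B|=4$ and rank $3$, which comes out of Theorem 5.1/5.2. The lower-bound construction for $[\tfrac74,\tfrac95)$ is also only gestured at: ``direct products with subspaces and appending carefully chosen cosets'' does not by itself produce sets with $\sigma(A)$ close to an arbitrary $K$ in that range while keeping the affine span equal to the whole ambient space. The paper's construction takes $T=\{0,e_1,e_2,e_3\}$, forms $T\times\mathbb{F}_2^{n-3}$, and passes to a subset of density $\tfrac{7}{4K}$ whose sumset still fills $(T+T)\times\mathbb{F}_2^{n-3}$, which is what pins down the ratio $\tfrac87 K$. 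Your $[1,\tfrac74)$ lower-bound family and the $|A|=|V|/2$ computation are correct, but the remainder is a programme, not a proof.
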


\begin{prop}[B. Green and T. Tao~\cite{G1}]
\[G(K)=\left\{\begin{array}{ll}
2 & 1 < K < \frac{7}{4};\\
3 & \frac{7}{4} \leq K < \frac{9}{5}.\end{array}\right.
\]
\end{prop}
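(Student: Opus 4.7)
The central observation is the equivalence $G(A)\leq 2\iff |V'|\leq 2|A|$, where $V'$ denotes the minimal affine hull of $A$: any two cosets of a subspace $V$ form an affine subspace of size $2|V|$, and conversely if $|V'|\leq 2|A|$ then a codimension-one subspace $V$ of the linear part of $V'$ satisfies $|V|=|V'|/2\leq|A|$ and covers $A$ in two of its cosets. This immediately gives the first case: for $1<K<7/4$, the earlier proposition $F(K)=K$ yields $|V'|\leq K|A|<2|A|$ and hence $G(K)\leq 2$, with the matching lower bound $G(K)\geq 2$ coming from the fact that $A$ need not be a single coset when $K>1$.

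For $7/4\leq K<9/5$, the plan is to use $F(K)=8K/7<72/35$, giving $|V'|<4|A|$. When $|V'|\leq 2|A|$ the observation above suffices, so assume $2|A|<|V'|<4|A|$. A codimension-two subspace $V\subseteq V'_0$ (the linear part of $V'$) has $|V|=|V'|/4<|A|$; the goal is to find such a $V$ for which $A$ omits one of the four cosets of $V$ in $V'$. Suppose for contradiction that $A$ meets all four cosets of every codimension-two $V\subseteq V'_0$. Let $S$ be the stabilizer of $A+A$. Passing to $G/S$, the stabilizer of $\bar A+\bar A$ is trivial by maximality, so Kneser's theorem yields $|\bar A+\bar A|\geq 2|\bar A|-1$, and since $\sigma(\bar A)\leq\sigma(A)<9/5$ one deduces $|\bar A|\leq 4$. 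A brief case analysis eliminates $|\bar A|\in\{1,2,3\}$: for $|\bar A|\in\{2,3\}$ the sumset $\bar A+\bar A$ is itself an affine flat whose stabilizer strictly contains $\{0\}$, contradicting maximality of $S$; for $|\bar A|=1$ we have $A\subseteq S+c$ and $A+A=S$, yielding $\sigma(A)=|S|/|A|>2$, incompatible with $|V'|>2|A|$. Hence $|\bar A|=4$ with $\bar A$ not an affine $2$-plane, so its affine hull in $G/S$ has size $\geq 8$; combined with $|V'|\leq(72/35)|A|\leq(288/35)|S|<9|S|$, this forces $|V'/S|=8$ exactly. Finally, any nonzero $w\in\bar A-\bar A$ lifts to the codimension-two subspace $V=S+\langle w\rangle\subseteq V'_0$, with $|V|=2|S|=|V'|/4\leq|A|$, and in $V'_0/V\cong(G/S)/\langle w\rangle$ the image of $A$ has cardinality at most $3$ (the two elements of $\bar A$ differing by $w$ collapse). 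This contradicts our assumption and proves $G(A)\leq 3$.

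For the matching lower bound $G(K)\geq 3$ when $K>7/4$, I would take a subspace $S$ and vectors $f_1,f_2,f_3$ whose images in $V'/S\cong\mathbb{F}_2^3$ are linearly independent, set $V'=S+\langle f_1,f_2,f_3\rangle$, and let $A$ be obtained from $S\cup(S+f_1)\cup(S+f_2)\cup(S+f_3)$ by removing a single element $x\in S+f_3$. A direct coset-by-coset computation shows $A+A$ fills exactly seven of the eight cosets of $S$ in $V'$ (the coset $S+f_1+f_2+f_3$ is missed, since no pair from $\bar A=\{0,f_1,f_2,f_3\}$ sums to $f_1+f_2+f_3$), yielding $|A+A|=7|S|$ and $\sigma(A)=7|S|/(4|S|-1)$, which approaches $7/4$ from above as $|S|\to\infty$. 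Since $|V'|=8|S|>2|A|$, the observation gives $G(A)\geq 3$, and so $G(K)\geq 3$ for every $K>7/4$.

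The main obstacle I anticipate is the Kneser analysis in the regime $2|A|<|V'|<4|A|$: eliminating $|\bar A|\in\{1,2,3\}$ via maximality of the stabilizer $S$, and then pinning down $|V'/S|=8$ from the quantitative bound $F(K)\leq 8K/7$, are what require the most care. Once the structure of $A$ modulo $S$ is understood, exhibiting a codimension-two subspace that $A$ fails to fill is a short step, as is the sharp lower-bound construction.
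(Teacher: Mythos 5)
Your proof is correct and, at its core, follows the same strategy the paper itself uses for the more general Proposition~6.1: quotient by the maximal period $S$ of $A+A$, apply Kneser's theorem in $G/S$ with trivial stabilizer, bound $|\bar A|\le 4$ from $\sigma(\bar A)<9/5$, rule out the degenerate cases $|\bar A|\le 3$ (periodicity of $\bar A+\bar A$ contradicts maximality of $S$), and then cover $A$ by three cosets of $S+\langle \tilde w\rangle$ for $w\in\bar A-\bar A$. Your repackaging via the clean equivalence $G(A)\le 2\iff |V'|\le 2|A|$ together with the $F(K)$ formula is a nice way to handle both the first regime and the entry into the second, though the intermediate deduction that $|V'/S|=8$ exactly is an unnecessary detour: as in the paper's \S 6.1, the direct inequality $|S|\le \frac{K}{2\cdot 4-1}|A|<\frac{|A|}{3}$ already gives $|S+\langle\tilde w\rangle|=2|S|<|A|$, which is all that is needed. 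Your lower-bound construction (full on three $S$-cosets, one element removed from a fourth) is the standard extremal example and matches the paper's Cartesian-product constructions in spirit.

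One point worth flagging concerns the endpoint $K=7/4$. Your construction gives $\sigma(A)=7|S|/(4|S|-1)>7/4$ strictly, so it establishes $G(K)\ge 3$ only for $K>7/4$. More to the point, your own observation, combined with $F(7/4)=2$, forces $|V'|\le 2|A|$ and hence $G(A)\le 2$ for every $A$ with $\sigma(A)\le 7/4$, so in fact $G(7/4)=2$. This is at odds with the proposition as transcribed, which asserts $G(7/4)=3$; the correct ranges (as in Green--Tao's original) are $G(K)=2$ for $1<K\le 7/4$ and $G(K)=3$ for $7/4<K<9/5$. So the discrepancy lies with the statement rather than your argument, but you should be aware that, as written, your proof does not (and cannot) establish $G(7/4)=3$.
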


In this paper, we will extend these results by determining the exact
value of $F(K)$ and $G(K)$ in the region $1 \leq K < 2$, and thus
classify all small doubling sets. In particular, we will prove the
following:

\begin{thm}

\[F(K)=\left\{\begin{array}{ll}
K & 1 \leq K < \frac{7}{4};\\
\frac{8}{7}K &  \frac{7}{4} \leq K < 2.\end{array}\right.
\]
\end{thm}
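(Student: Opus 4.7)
We split the theorem into two ranges. For $1 \le K < \frac{7}{4}$ the identity $F(K)=K$ is already part of Green and Tao's Proposition quoted above, so the new content lies in the range $\frac{7}{4}\le K<2$. The task is to establish the matching lower bound and the upper bound $F(K)\le\frac{8}{7}K$ on this range.

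For the lower bound I plan to exhibit an explicit extremal family. Fix a subspace $U\subseteq\mathbb{F}_2^n$ and three vectors $e_1,e_2,e_3$ that are linearly independent modulo $U$, and set
\[
A_0 = U\cup(U+e_1)\cup(U+e_2)\cup(U+e_3),\qquad V = U+\langle e_1,e_2,e_3\rangle.
\]
Then $|A_0|=4|U|$, $|V|=8|U|$, and $A_0+A_0 = V\setminus(U+e_1+e_2+e_3)$ has size $7|U|$. Now remove a subset $Y\subseteq U$ with $0<|Y|<|U|/2$, chosen so that $A:=A_0\setminus Y$ still spans $V$ and still satisfies $A+A=A_0+A_0$ (easy for $|U|$ moderately large). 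The ratio $\sigma(A)=\frac{7|U|}{4|U|-|Y|}$ ranges over a dense subset of $[\frac{7}{4},2)$ and $|V|/|A|=\frac{8}{7}\sigma(A)$, giving $F(K)\ge\frac{8}{7}K$ for every $K$ in the range.

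The upper bound rests on the following assertion, which is the crux of the argument.

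\emph{Key Lemma.} If $A\subseteq\mathbb{F}_2^n$ satisfies $\sigma(A)<2$ and $V$ is the affine span of $A$, then $|A+A|\ge\frac{7}{8}|V|$.

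Granting this lemma, one has $|V|\le\frac{8}{7}|A+A|\le\frac{8}{7}\sigma(A)|A|\le\frac{8}{7}K|A|$, which is the desired bound. To prove the lemma I would translate so $0\in A$ and induct on $\dim V$, decomposing $V=H\sqcup(H+g)$ for a hyperplane $H$ and setting $A_0=A\cap H$, $A_1=A\cap(H+g)$, so that
\[
A+A = \bigl[(A_0+A_0)\cup(A_1+A_1)\bigr]\cup(A_0+A_1),
\]
with the first piece in $H$ and the second in $H+g$. The hyperplane is chosen to exploit the four-shift covering result of the paper: when $A$ lies in four cosets of a subspace of size at most $|A|$, a well-chosen $H$ forces one of $A_0,A_1$ to have doubling constant below $2$, so the inductive hypothesis applies, while $|A_0+A_1|$ is controlled from below by a Plünnecke--Ruzsa type estimate combined with the fact that $A_0+A_1$ is an entire coset of a subspace whenever $A_0$ or $A_1$ is dense enough.

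The main obstacle I expect is extracting the sharp constant $\frac{7}{8}$: generic Plünnecke--Ruzsa bounds lose absolute constants, so the induction must use the additional geometric input that sets with $\sigma<2$ are covered by few shifts of a subspace. The equality case $|A+A|=\frac{7}{8}|V|$ should be forced to match the extremal construction above, essentially a three-dimensional $\{0,e_1,e_2,e_3\}$ pattern over a base subspace $U$; pinning down this equality case and closing the induction around it will be the delicate part of the proof.
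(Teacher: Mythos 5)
Your lower-bound construction is correct and is essentially the paper's lower bound specialized to the smallest normal number: your $A_0$ is the set $\{0,e_1,e_2,e_3\}\times U$, i.e., the level-$4$ normal set times a subgroup, and the density adjustment via $Y$ is the "random subset of prescribed density" in the paper's \S3.1. The only minor thing to verify (and you note it) is that $A$ still spans $V$, which follows cheaply from $|A+A| = 7|U| > 4|U| = |V|/2$. So the lower bound is fine.

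The upper bound is a genuinely different route from the paper, and this is where the proposal has real gaps. The paper's \S3.2 invokes Kneser's theorem to pass to the quotient $B = (A+H)/H$, observes that $|B+B|=2|B|-1$, then in \S5 classifies all such $B$ via Lev's refinement of Kemperman's structure theorem, concluding $n_i=2^{i+1}$ and $C(n_i)=i+2$, and finally plugs into Proposition~3.1 and observes $\frac{2^{C(n_j)}}{2n_j-1}=\frac{2^{j+2}}{2^{j+2}-1}\le\frac{8}{7}$. Your Key Lemma ($|A+A|\ge\frac{7}{8}|V|$ whenever $\sigma(A)<2$ and $V$ is the affine span) is indeed equivalent to the upper bound and is a clean reformulation, but the inductive proof you sketch does not establish it. Specifically: (1) you assert that a hyperplane $H$ can always be chosen so that one of $A_0,A_1$ has $\sigma<2$, but no argument is offered, and it is not clear this is achievable in general; (2) you invoke a ``Plünnecke--Ruzsa type estimate'' to lower-bound $|A_0+A_1|$, but Plünnecke--Ruzsa inequalities give \emph{upper} bounds on iterated sumsets, not lower bounds, so this does not supply what is needed; (3) you acknowledge that the sharp constant $\frac{7}{8}$ is not recovered by your argument, and extracting it is exactly the content of the theorem -- without it the argument proves nothing; (4) the covering input you want (``$A$ lies in four cosets of a subspace of size at most $|A|$'') is Theorem~1.9 of the paper, whose proof in \S4 and \S6 uses the same Kneser/Kemperman machinery that you are trying to avoid, so as presented your approach would be circular unless you proved the covering statement independently first. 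In short: correct lower bound, correct statement of a Key Lemma equivalent to the upper bound, but no actual proof of it -- and the ideas proposed for proving it (a lossy Plünnecke-type bound plus an unspecified hyperplane choice) do not plausibly yield the sharp constant. The missing ingredient is precisely the Kemperman/Lev structure theory that the paper uses to classify the quotients $B$ with $|B+B|=2|B|-1$.
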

\begin{thm}
\[G(K)=\left\{\begin{array}{ll}
2 &  1 < K < \frac{7}{4};\\
3 & \frac{7}{4} \leq K <
\frac{31}{16};\\
4 & \frac{31}{16} \leq K < 2.\end{array}\right.
\]

\end{thm}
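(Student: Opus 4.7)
The plan is to exploit Theorem~1.8 to find a small affine hull for $A$ and then choose a linear subspace inside this hull of the right index to cover $A$. After translating so that $0\in A$, let $W$ denote the linear span of $A$, so that $A\subseteq W$ and $|W|\le F(K)|A|$. Any candidate subspace $V$ in Definition~1.3 can be replaced by $V\cap(W-W)$ without enlarging $V$ or the number of translates needed, so we may and shall look for $V\subseteq W$ as a subspace. The argument then splits by range: I handle the two extreme regimes by directly choosing a codimension-one or codimension-two subspace of $W$, and I treat the middle regime by a delicate contradiction argument that squeezes out the constant $31/16$.

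For $K<7/4$, $F(K)=K<2$ gives $|W|<2|A|$, so any codimension-one subspace $V$ of $W$ has $|V|<|A|$ and its two cosets in $W$ cover $A$, giving $G(K)\le 2$. For $K<2$, $F(K)=8K/7<16/7<4$ gives $|W|<4|A|$, so any codimension-two subspace $V$ of $W$ has $|V|<|A|$ and its four cosets cover $A$, giving $G(K)\le 4$. The crux is $G(K)\le 3$ for $K<31/16$: we may assume $2|A|<|W|\le(31/14)|A|$ (else the two-translate bound already suffices), and we must find a codim-two $V\subseteq W$ such that $\pi_V(A)$ misses at least one point of $W/V\cong \mathbb{F}_2^2$. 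Arguing by contradiction, suppose that for every such $V$ we have $\pi_V(A)=\mathbb{F}_2^2$; writing the fiber sizes as $n_1\ge n_2\ge n_3\ge n_4\ge 1$ with $\sum n_i=|A|$, I decompose $A+A$ across the four cosets of $V$: the identity coset contains $\bigcup_i(A_i+A_i)$, while each of the three nonzero cosets contains a union $(A_k+A_l)\cup(A_{k'}+A_{l'})$ coming from the two pair-partitions of $\{1,2,3,4\}$ whose pairwise sums in $W/V$ match the given coset. Combining the trivial estimate $|X+Y|\ge\max(|X|,|Y|)$, the pigeonhole implication $n_k+n_l>|V|\Rightarrow|A_k+A_l|=|V|$, and the freedom to vary $V$ over all codim-two subspaces (so the fiber bounds must hold uniformly), I expect to extract the sharp inequality $|A+A|\ge(31/16)|A|$, contradicting $\sigma(A)<31/16$.

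For the matching lower bounds I exhibit extremal sets. The case $G(K)\ge 3$ at $K=7/4$ is covered by the Green--Tao construction already invoked in Proposition~1.7. For $G(K)\ge 4$ at $K=31/16$ I construct an explicit set $A$ that saturates the optimization above: it is built as a balanced union of four carefully chosen subsets of four cosets of a subspace $V$ with $|V|/|A|=14/31$, engineered so that $\sigma(A)=31/16$ and every codimension-two subspace of $W$ projects $A$ surjectively onto $\mathbb{F}_2^2$, thereby forcing any covering by translates of a subspace of size $\le|A|$ to use at least four translates.

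The principal obstacle is the sharp optimization in the $G\le 3$ step: identifying the extremal fiber configuration $(n_1,n_2,n_3,n_4)$ together with the structure of the $A_i$ that minimizes $|A+A|/|A|$ subject to the surjectivity hypothesis, and verifying that the minimum is exactly $31/16$ rather than something weaker. A secondary difficulty is making the extremal construction in the lower bound realize precisely this configuration, so that the $\le$ and $\ge$ sides of the threshold meet at $31/16$; this is where the rigidity of $2$-dimensional quotients over $\mathbb{F}_2$ should do the essential work.
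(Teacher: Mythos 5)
Your proposal outlines a genuinely different route from the paper's, but it leaves the two hardest steps — the very steps that decide the constant $31/16$ — as unproven sketches, which you yourself acknowledge. Let me be specific about where the gaps are and why they matter.

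For the upper bound $G(K)\le 3$ when $\frac{7}{4}\le K<\frac{31}{16}$, you reduce to the case $2|A|<|W|\le\frac{31}{14}|A|$ and then try to show that if \emph{every} codimension-two subspace of $W$ projects $A$ onto all of $\mathbb{F}_2^2$, then $\sigma(A)\ge\frac{31}{16}$. You set up the fiber decomposition $n_1\ge n_2\ge n_3\ge n_4$ and the pairing of $A_i+A_j$ across the four cosets, but then write ``I expect to extract the sharp inequality.'' That is not an argument, and it is far from clear the optimization even closes: the fibers $A_i$ can themselves carry arbitrary periodic structure inside a coset of $V$, so the trivial bound $|X+Y|\ge\max(|X|,|Y|)$ and the pigeonhole implication $n_k+n_l>|V|\Rightarrow|A_k+A_l|=|V|$ are a long way from a sharp $\frac{31}{16}$. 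The paper avoids this entirely by first quotienting by the maximal subgroup $H$ for which $A+A$ is $H$-periodic (via Kneser), reducing to a normal set $B$ of size at most $8$; the covering question then becomes a \emph{finite} check on a set of size $\le 8$ inside $\mathbb{F}_2^4$, which the paper settles by looking at the single missing element $u$ of $B+B$. Your direct approach lacks this finitization, and it is precisely the finitization that makes the constant $31/16$ extractable.

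For the lower bound $G(K)\ge 4$ when $K\ge\frac{31}{16}$, you describe an extremal set only in vague terms (``a balanced union of four carefully chosen subsets of four cosets of a subspace $V$ with $|V|/|A|=14/31$''); no explicit construction is given, and the claimed ratio $14/31$ does not obviously match any natural parameter here. The paper's construction is concrete and easy to verify: take $T\subset\mathbb{F}_2^5$ to be all vectors of Hamming weight at most $2$ (a normal set with $|T|=16$, $|T+T|=31$, $T+T=\mathbb{F}_2^5\setminus\{(1,1,1,1,1)\}$), and set $A=A'\times T$ for a dense random $A'\subset\mathbb{F}_2^n$. The three-translate covering is then ruled out by a short linear-independence argument on the standard basis vectors inside the translates.

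Your two easy endpoints do work and are cleaner than the paper's route: $F(K)=K<2$ for $K<\frac{7}{4}$ gives $|W|<2|A|$ and hence a codimension-one covering, and $F(K)=\frac{8}{7}K<4$ for $K<2$ gives a codimension-two covering. This is a nice use of Theorem 1.8 as a black box. But the two pivotal claims at $\frac{31}{16}$ are left open, and the machinery you propose for them does not obviously converge to the stated constant. As written, this is a plan, not a proof; to complete it you would either need to carry out the quotient-by-$H$ reduction (at which point you are essentially reproducing the paper's argument) or actually solve the extremal problem you set up, which appears to be harder than the route via Kemperman--Lev.
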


To this end, we need to study the structure of \emph{normal sets},
i.e., those sets $T \subseteq \mathbb{F}_2^m$ with $|T+T|=2|T|-1$.
The definition will be carefully introduced in section 2. Two other
key definitions, \emph{normal numbers} and \emph{rank of a normal
set}, will also be given in the same section.

In section 3, we give a formula of $F(K)$ in terms of normal numbers
for $\frac{7}{4} \leq K < 2$. By a similar method, we derive an
estimate for $G(K)$ in section 4.

In section 5, we determine all normal numbers and their
corresponding ranks via Kemperman-type theory, thus determining the
exact value of $F(K)$. Being more careful, I prove some better
bounds for $G(K)$ in section 6, which will determine the exact value
of $G(K)$ for $1 < K < 2$. This will complete the proof of Theorem
1.8 and Theorem 1.9.

In the last section, we will discuss how the small doubling subsets
of $\mathbb{F}_2^n$ are intrinsically related to the normal sets,
and then completely classify the small doubling sets with
$\sigma(A)<2$.

\section{Definitions}

\begin{defn}
(Normal numbers) A positive integer $n \geq 4$ is called a
\emph{normal number} if there exists an $m \in \mathbb{Z}^+$ and a
set $T \subseteq \mathbb{F}^m_2$ such that $|T| = n$ and $|T+T| =
2n-1$. Such a set $T$ is called a \emph{normal set of level $n$}.
\end{defn}

There are infinitely many normal numbers. For instance, consider
$T=\{(a_1,a_2,...,a_m)\in \mathbb{F}_2^{n+1}\,\,|\,\,a_1=0,
a_2,a_3,...,a_m \textrm{ not all }1's \}\cup \{(1,0,0,...,0)\}$ then
$|T|=2^n$ and $|T+T|=|\mathbb{F}_2^{n+1}-
\{(1,1,1,...,1)\}|=2^{n+1}-1$. This means $2^n$ ($n\geq 2$) is a
normal number. On the other hand, not all numbers are normal. For
example, 5 is not normal.

Let $S$ denote the set of all normal numbers, and $\Sigma_n$ denote
the set of all normal sets of level $n$. Write $S =
\{n_1<n_2<n_3<...\}$. $n_i$ is called \emph{the i-th normal number}.
In particular, $n_1=4$.

Let us next recall the definition of \emph{Freiman
$s$-isomorphism}.
\begin{defn}
(Freiman isomorphism) Let $s \geq 2$ be an integer. Let $G, G'$ be
two abelian groups and let $A\subseteq G$ and $A' \subseteq G'$ be
subsets. A map $\phi : A \rightarrow A'$ is called a \emph{Freiman
$s$-homomorphism} if whenever $a_1,...,a_s,b_1,...,b_s \in A$
satisfy \[ a_1+a_2+...+a_s = b_1+b_2+...+b_s\] we have
\[\phi(a_1)+\phi(a_2)+...+\phi(a_s) =
\phi(b_1)+\phi(b_2)+...+\phi(b_s).\] If $\phi^{-1}$ is also a
Freiman s-homomorphism, then we say that $\phi$ is a \emph{Freiman
$s$-isomorphism}, and write $A\cong_s A'$.
\end{defn}
Now suppose $T$ is a normal set of level $n$ and $T'$, a subset of
$\mathbb{F}^m_2$, is Freiman 2-isomorphic to $T$. Then $|T'|=|T|=n$,
$|T'+T'|=|T+T|=2n-1$. Hence, $T'$ is also a normal set of level $n$.
This motivates the following definition.

\begin{defn}
(rank of normal set) Let $T$ be a normal set of level $n$. The
\emph{rank} of $T$ is the least integer $m$ such that there exists a
set $T' \subseteq \mathbb{F}_2^m$ which is Freiman 2-isomorphic to
$T$; i.e. $T' \cong_2 T$. Denote the rank of $T$ by $R(n, T)$.
\end{defn}

\begin{lem}
For any $n \in S$ and $T \in \Sigma_n$, we have $R(n, T) \leq n$.
\end{lem}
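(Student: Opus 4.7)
The plan is to construct an explicit Freiman $2$-isomorphic copy of $T$ inside $\mathbb{F}_2^n$ using two elementary moves: a translation that forces the set to contain $0$, followed by a coordinate change that collapses the ambient space down to the linear span of the set.

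First I would fix any $t_0 \in T$ and replace $T$ by its translate $T_0 := T + t_0$. Translation by a fixed element is a Freiman $s$-isomorphism for every $s$, since the constant $s\, t_0$ cancels from both sides of any additive identity $a_1+\cdots+a_s = b_1+\cdots+b_s$. In particular $T \cong_2 T_0$, and by construction $0 \in T_0$.

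Second, I would let $V \subseteq \mathbb{F}_2^m$ denote the linear span of $T_0$. Because $T_0$ contains $0$ together with at most $n-1$ further vectors, $d := \dim V \le n-1$. Picking any basis of $V$ gives a linear isomorphism $\phi : V \to \mathbb{F}_2^d$, which I then compose with the standard inclusion $\mathbb{F}_2^d \hookrightarrow \mathbb{F}_2^n$. The restriction of $\phi$ to $T_0$ is a Freiman $2$-isomorphism, because a linear bijection of $\mathbb{F}_2$-subspaces preserves and reflects every identity of the form $a_1+a_2 = b_1+b_2$. This yields $T \cong_2 T_0 \cong_2 \phi(T_0) \subseteq \mathbb{F}_2^n$, whence $R(n,T) \le n$ (in fact $\le n-1$).

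There is no substantial obstacle; the whole argument is just bookkeeping about Freiman $2$-isomorphisms. The only step that deserves explicit verification is the ``reverse'' direction of $\phi$, namely that $\phi(a_1)+\phi(a_2) = \phi(b_1)+\phi(b_2)$ forces $a_1+a_2 = b_1+b_2$ for $a_i, b_i \in T_0$; this is immediate from the injectivity of $\phi$ on $V$, since $a_1+a_2+b_1+b_2$ lies in $V$ by construction.
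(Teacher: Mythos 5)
Your proof is correct and follows essentially the same route as the paper: restrict a linear isomorphism of the span of the set to obtain a Freiman $2$-isomorphism into a low-dimensional space. The one addition you make is the preliminary translation to put $0 \in T_0$, which lets you bound the span dimension by $n-1$ rather than $n$; the paper skips this normalization and simply picks a maximal linearly independent subfamily $v_1,\ldots,v_m$ of $T$ with $m \le n$, sending $v_i \mapsto e_i$. Both arguments rest on the same observation that a linear bijection of the ambient span preserves and reflects two-fold sum identities, so the difference is cosmetic; your variant yields the marginally sharper conclusion $R(n,T) \le n-1$.
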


\begin{proof}
Let $v_1, v_2, \cdots, v_m$ ($m\leq n$) be a maximal collection of
linear independent elements in $T$. Then the map $\phi: v_i\mapsto
e_i\in \mathbb{F}_2^m$ induces an Freiman 2-isomorphism from $T$ to
$\phi(T)\subset \mathbb{F}_2^m$.
\end{proof}

\begin{defn}
For $n \in S$, define $C(n) = \sup_{T \in S_n} R(n, T)$.
\end{defn}

The existence of $C(n)$ is guaranteed by Lemma 2.1.

\section{A Formula for $F(K)$}
\begin{prop}
Let $n_i$ denote the $i$-th normal number. Then
\[F(K) = \max_{1 \leq j\leq i}
\frac{2^{C(n_j)}}{2n_j-1}K
\] if $\frac{2n_i-1}{n_i}\leq K < \frac{2n_{i+1}-1}{n_{i+1}}$.
\end{prop}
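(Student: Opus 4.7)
The plan is to prove matching lower and upper bounds for $F(K)$, with the lower bound coming from explicit constructions built on extremal normal sets and the upper bound coming from a structural reduction that locates a normal set hidden inside any small-doubling set.

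\textbf{Lower bound.} For each $j$ with $1 \leq j \leq i$, I would exhibit a set $A$ with $\sigma(A) \leq K$ whose minimal enclosing affine subspace has cardinality at least $\frac{2^{C(n_j)}}{2n_j-1}K|A|$. Start from a normal set $T_j$ of level $n_j$ achieving rank $C(n_j)$, placed in $\mathbb{F}_2^{C(n_j)}$ so that no proper affine subspace contains it. The raw set $A=T_j$ already gives ratio $\frac{2^{C(n_j)}}{n_j} = \frac{2^{C(n_j)}}{2n_j-1}\cdot \sigma(T_j)$, which matches the formula exactly at the endpoint $K = (2n_j-1)/n_j$. To interpolate to larger $K$ in the interval, inflate $A$ by taking the Minkowski product of $T_j$ with a subspace $V$ inside a larger ambient $\mathbb{F}_2^{C(n_j)+s}$ and gluing on additional translates of $V$ in a direction orthogonal to the copy of $\mathbb{F}_2^{C(n_j)}$. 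The attachments are arranged so that $\sigma(A)$ rises continuously to $K$ while the ratio (size of enclosing subspace)$/|A|$ tracks $\frac{2^{C(n_j)}}{2n_j-1}K$ along the way.

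\textbf{Upper bound.} Let $A \subseteq \mathbb{F}_2^m$ satisfy $\sigma(A) \leq K$. After translating, assume $0 \in A$ and $A$ affinely spans $\mathbb{F}_2^m$, so the minimal enclosing affine subspace has size $2^m$. The goal is to show $2^m \leq \frac{2^{C(n_j)}}{2n_j-1}K|A|$ for some $j \leq i$. The strategy is to pass to the quotient by a subspace stabilizer $H$ of $A+A$ (in the spirit of Kneser's theorem) and show that the image $\overline{A} \subseteq \mathbb{F}_2^m/H$ can be arranged to be a normal set, whose cardinality $n_j = |\overline{A}|$ is then a normal number. The rank bound $R(n_j, \overline{A}) \leq C(n_j)$ forces $m - \dim H \leq C(n_j)$, and combining with $|A| = |H|\cdot n_j$ and $|A+A| = |H|(2n_j-1)$ (up to a controlled defect coming from the Kneser reduction) produces the desired inequality after rearrangement. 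The interval hypothesis $K < (2n_{i+1}-1)/n_{i+1}$ forces the resulting level to satisfy $n_j \leq n_i$, which is why the maximum in the formula only ranges over $j \leq i$.

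\textbf{Main obstacle.} The delicate step is the structural reduction in the upper bound: converting the raw quantitative hypothesis $\sigma(A) \leq K$ into an actual normal-set quotient with matching rank data requires a careful Kneser-style analysis in $\mathbb{F}_2^m$, together with tracking how ambient dimensions accumulate across the reduction. The maximum over $j$ arises precisely because different inputs $A$ may reduce to different normal levels $n_j$, so no single $j$ is universally optimal. A secondary subtlety, on the lower-bound side, is arranging the interpolating construction so that $\sigma(A)$ reaches any prescribed $K$ in the interval tightly, without overshooting and without enlarging the enclosing affine subspace beyond what the formula predicts.
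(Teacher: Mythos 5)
Your plan matches the paper's proof in broad structure: a lower bound from explicit constructions built on rank-extremal normal sets, and an upper bound from a Kneser-style reduction. The upper bound outline is a faithful rendering of the paper's argument — take the maximal subspace $H$ such that $A+A$ is a union of $H$-cosets, pass to $B=(A+H)/H$, use Kneser's theorem twice to show $B$ is a normal set of level $n_j$ with $j \leq i$, invoke the rank bound $R(n_j,B) \leq C(n_j)$, and combine $|A+A| = |H|(2n_j-1)$ with $|A+A| \leq K|A|$ to get $2^m = 2^{C(n_j)}|H| \leq \frac{2^{C(n_j)}}{2n_j-1}K|A|$.

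The lower bound, however, departs from the paper in a way that I think does not work as stated. The paper takes $A$ to be a random (i.e.\ linearly unstructured) subset of $T_j \times \mathbb{F}_2^{n-C(n_j)}$ of density $\frac{2n_j-1}{n_jK}$: this \emph{deletes} points, leaving $A+A$ equal to the full product $(T_j+T_j)\times\mathbb{F}_2^{n-C(n_j)}$, so $|A+A|$ stays fixed at $(2n_j-1)2^{n-C(n_j)}$ while $|A|$ shrinks, driving $\sigma(A)$ up to exactly $K$ and leaving the enclosing affine span equal to all of $\mathbb{F}_2^n$. Your mechanism — ``gluing on additional translates of $V$'' to the product $T_j\times V$ — adds points and therefore adds new cosets to $A+A$. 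If, say, you adjoin $\{0\}\times(w+V)$ to $T_j\times V$, then $A+A$ acquires $T_j\times(w+V)$, and for $n_j=4$ one already gets $\sigma(A)=\frac{3n_j-1}{n_j+1}=\frac{11}{5}>2$, well outside the interval; meanwhile the enclosing subspace doubles, so the ratio overshoots $\frac{2^{C(n_j)}}{2n_j-1}K$. In short, adding translates inflates $\sigma$ too coarsely and does not interpolate tightly, whereas deletion lets both $\sigma(A)$ and $|A|$ be tuned continuously with $A+A$ (and hence the affine span) held fixed. The fix is to replace the gluing construction with the paper's deletion-to-density construction, which then gives the lower bound cleanly as $F(K) \geq \frac{2^n}{|A|} = \frac{2^{C(n_j)}}{2n_j-1}K$ for all $K \geq \frac{2n_j-1}{n_j}$.
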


In this section, I will prove Proposition 3.1 by showing $F(K) \geq
\max_{1 \leq j\leq i} \frac{2^{C(n_j)}}{2n_j-1}K$ and $F(K) \leq
\max_{1 \leq j\leq i} \frac{2^{C(n_j)}}{2n_j-1}K$, respectively.

\subsection{The Lower Bound}
It suffices to prove that \[F(K)\geq \frac{2^{C(n_j)}}{2n_j-1}K.\]
when $K \geq \frac{2n_j-1}{n_j}$\\ By definition, there exists a
normal set $T \subseteq \mathbb{F}_2^{C(n_j)}$ of level $n_j$ such
that $C(n_j)=R(n_j,T)$. It follows that $T$ is not contained in any
non-trivial affine subspace of $\mathbb{F}_2^{C(n_j)}$. Let $A$ be a
random subset\footnote{By a \emph{random subset} $A$ we mean a set
without too much linear structure such that $A+A$ covers the whole
set $(T+T)\times \mathbb{F}_2^{n-C(n_j)}$.} of $T \times
\mathbb{F}_2^{n-C(n_j)}$ with $n$ large of density close to
$\frac{2n_j-1}{n_j K}$. Then the smallest affine subspace of
$\mathbb{F}_2^n$ containing $A$ is $\mathbb{F}_2^n$ itself.\\
Therefore, \[F(K) \geq \frac{2^n}{\frac{2n_j-1}{n_j K} \times n_j
\cdot 2^{n-C(n_j)}} = \frac{2^{C(n_j)}}{2n_j-1}K\]

\subsection{The Upper Bound}
Here I follow B. Green and T. Tao's idea in~\cite{G1}.\\
Suppose $K < \frac{2n_{i+1}-1}{n_{i+1}}$. Let $H$ be the largest
subspace of $\mathbb{F}_2^n$ such that $A+A$ is a union of cosets of
$H$. From Kneser's theorem (see [10, Theorem 5.5]) we have $|A+A|
\geq 2|A|-|H|$. Since $K < \frac{2n_{i+1}-1}{n_{i+1}}$, the
inequality gives $|A+A| < (2n_{i+1}-1)|H|$. If we write $B:=(A+H)/H
\subseteq \mathbb{F}_2^n/H$, then $B+B$ has cardinality at most
$2n_{i+1}-2$. Moreover, $B+B$ cannot be expressed as the union of
cosets of any non-trivial subspace of $\mathbb{F}_2^n/H$. Kneser's
theorem gives
$|B+B| \geq 2|B| - 1$ which implies $|B| \leq n_{i+1}-1$. \\
On the other hand, it is clear that $\frac{|B+B|}{|B|}\leq K <2$.
Together with Kneser's theorem, we get $|B+B| = 2|B|-1$. Hence $B$
is Freiman 2-isomorphic to a normal set. In particular, $|B| = n_j$
for some $j \in \{1, 2, ..., i\}$.\\
It follows that $A$ can be covered by the a subspace isomorphic to
$\mathbb{F}_2^{C(n_j)} \times H$ with cardinality $2^{C(n_j)}|H|\leq
\frac{2^{C(n_j)}}{2n_j-1}K \cdot |A|$. Therefore,
\[F(K)\leq \max_{1 \leq j\leq i} \frac{2^{C(n_j)}}{2n_j-1}K.\]

\section{An Estimate for $G(K)$}
Instead of a precise formula, I give the following estimate for
$G(K)$.
\begin{prop}
Let $n_i$ denote the $i$-th normal number. Then
\[ \max_{1\leq j \leq i}(C(n_j)+2-\lceil\log_2n_j\rceil) \leq G(K) \leq \max_{1\leq j\leq i}2^{C(n_j)-\lfloor\log_2(n_j-1)\rfloor} \]
if $\frac{2n_i-1}{n_i}\leq K < \frac{2n_{i+1}-1}{n_{i+1}}$.
\end{prop}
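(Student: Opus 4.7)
Both halves of the inequality parallel the corresponding bounds in Proposition~3.1, with the covering subspace $V$ replacing the enclosing affine subspace of that argument. For the upper bound I start from the Kneser reduction of Section~3.2. Given $A$ with $\sigma(A)\leq K<\frac{2n_{i+1}-1}{n_{i+1}}$, let $H$ be the largest subspace such that $A+A$ is a union of cosets of $H$, and set $B:=(A+H)/H$. As there, $B$ is Freiman $2$-isomorphic to a normal set of level $n_j$ for some $j\leq i$, so after translating $A$ to contain $0$, $A$ lies in an affine subspace $W\subseteq \mathbb{F}_2^n$ with $H\subseteq W$ and $\dim W\leq C(n_j)+\dim H$. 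The new ingredient is to pick a subspace $V$ with $H\subseteq V\subseteq W$ of dimension $\lfloor\log_2(n_j-1)\rfloor+\dim H$. Using $|A+A|=(2n_j-1)|H|$ and $K<2$,
\[
|A|\geq \frac{(2n_j-1)|H|}{K}>\bigl(n_j-\tfrac12\bigr)|H|\geq 2^{\lfloor\log_2(n_j-1)\rfloor}|H|=|V|,
\]
so $V$ is admissible, and covering $W$ (hence $A$) uses at most $|W|/|V|\leq 2^{C(n_j)-\lfloor\log_2(n_j-1)\rfloor}$ cosets of $V$; taking the maximum over $j\leq i$ yields the upper bound.

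For the lower bound I imitate the construction in Section~3.1. For each $j\leq i$, pick a normal set $T\subseteq \mathbb{F}_2^{C(n_j)}$ of level $n_j$ realizing $R(n_j,T)=C(n_j)$, so that $T$ affinely spans $\mathbb{F}_2^{C(n_j)}$. For $N$ large, take a generic subset $A\subseteq T\times\mathbb{F}_2^N$ of density $\alpha=\frac{2n_j-1}{n_jK}$, chosen so that $A+A=(T+T)\times\mathbb{F}_2^N$ and $A$ affinely spans $\mathbb{F}_2^{C(n_j)+N}$; then $|A|=(2n_j-1)2^N/K$ and $\sigma(A)=K$. If $A$ is covered by $r$ translates of a subspace $V$ with $|V|\leq |A|$, the union of those cosets lies in an affine subspace of dimension at most $\dim V+r-1$, hence
\[
r\geq C(n_j)+N+1-\dim V\geq C(n_j)+1-\lfloor\log_2((2n_j-1)/K)\rfloor.
\]
A short case split according to whether $n_j$ is a power of $2$, combined with $K\geq\frac{2n_j-1}{n_j}$, converts the floor term into $\lceil\log_2 n_j\rceil-1$ and gives $r\geq C(n_j)+2-\lceil\log_2 n_j\rceil$.

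The main technical hurdle is producing the generic $A$: one needs a subset of $T\times\mathbb{F}_2^N$ with the correct cardinality whose doubled set equals $(T+T)\times\mathbb{F}_2^N$ and whose affine span is the whole ambient space. For $\alpha$ bounded away from $1$ this follows from a routine random-subset argument with $N\to\infty$. A genuine boundary case arises at $K=\frac{2n_j-1}{n_j}$ when $n_j$ is a power of $2$: there $\alpha=1$, forcing $A=T\times\mathbb{F}_2^N$, which may admit a covering by only $C(n_j)+1-\lceil\log_2 n_j\rceil$ translates of a suitable large subspace; recovering the asserted $+2$ at such points needs either a refined construction or a separate monotonicity argument for $G(K)$.
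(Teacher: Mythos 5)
Your argument mirrors the paper's on both sides, with one cosmetic improvement and one honest caveat that is worth spelling out.

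\textbf{Upper bound.} Identical in substance to the paper's Section~4.2: the same Kneser/Lev reduction produces $H$ and $B$, the same intermediate subspace $H'\supseteq H$ of dimension $\dim H+\lfloor\log_2(n_j-1)\rfloor$ is inserted, and the same count $2^{C(n_j)-\lfloor\log_2(n_j-1)\rfloor}$ of cosets emerges. Your verification that $|V|\leq|A|$ (via $|A+A|=(2n_j-1)|H|$ and $K<2$) matches the paper's $|H|<\frac{1}{n_j-1}|A|$ computation.

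\textbf{Lower bound.} Same construction (Cartesian product of an extremal normal set $T$ with a random slab). Where the paper bounds $\dim V$ from below by exhibiting an explicit linearly independent family built from the sets $M_t=(a_t+V)\cap\{e_1,\dots,e_{m+n}\}$, you instead note that the union of $r$ cosets of $V$ lies in an affine subspace of dimension at most $\dim V+r-1$; this yields the same inequality $r\geq m+n+1-\dim V$ in one line and is, if anything, a cleaner way to present it. The two formulations are equivalent.

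\textbf{The boundary case you flag is real and is also present in the paper.} The paper's bound $\dim V\leq n-1+\lceil\log_2 n_j\rceil$ requires the density $\frac{2n_j-1}{n_jK}$ to be strictly below $1$, i.e.\ $K>\frac{2n_j-1}{n_j}$. When $K=\frac{2n_j-1}{n_j}$ (and $n_j$ is a power of $2$, which Theorem~5.2 shows is always the case), $|A|=n_j2^n$ is itself a power of $2$ and one only obtains $r\geq C(n_j)+1-\lceil\log_2 n_j\rceil=2$; concretely $A=T\times\mathbb{F}_2^n$ with $T=\{0,e_1,e_2,e_3\}$ is covered by two cosets of any index-$2$ subspace. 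For $j<i$ one has $K\geq\frac{2n_i-1}{n_i}>\frac{2n_j-1}{n_j}$ and the argument is fine, so the only unproven point is $j=i=1$, $K=\frac{7}{4}$, where $G(7/4)=3$ is supplied by the cited Green--Tao Proposition~1.7. Your suggestion of a ``monotonicity argument'' would not close this: $G$ is nondecreasing, so monotonicity bounds $G(7/4)$ from above by values at larger $K$, not from below. Either a refined construction at $K=7/4$ or an explicit appeal to Proposition~1.7 is needed, and you should make that appeal rather than leave it as an open hurdle.
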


\subsection{The Lower Bound}
Suppose $\frac{2n_j-1}{n_j} \leq K <2$, we show that
\[G(K) \geq C(n_j)+2-\lceil\log_2n_j\rceil.\]
Consider a normal set $T \subseteq \mathbb{F}_2^{C(n_j)}$ of level
$n_j$ satisfying $R(n_j, T)=C(n_j)$. Write $m =C(n_j)$. Under a
proper linear transformation, we may assume that
$\{0,e_1,e_2,...,e_m\} \subseteq T$. Let $A$ be the Cartesian
product (in $\mathbb{F}_2^m \times \mathbb{F}_2^n =
\mathbb{F}_2^{m+n}$) of $T$ together with a random subset $A'$ in
$\mathbb{F}_2^n$ of density close to $\frac{2n_j-1}{n_j K}$. (When choosing $A'$, let $\{0, e_{m+1}, e_{m+2}, ...,e_{m+n}\} \subseteq A'$.)\\
Suppose $A$ is covered by the union of $a_1+V, a_2+V, ..., a_l+V$
where $V \subseteq \mathbb{F}_2^{m+n}$ is a linear subspace of
cardinality $|V| \leq |A|$. Then \[\textrm{dim }V \leq
\log_2\lfloor|A|\rfloor \leq n-1+\lceil\log_2n_j\rceil.\] Assume
that $a_1=0$. Let
$M_t:= (a_t+V)\cap \{e_1, e_2, ..., e_{m+n}\}$, $ t=1,2,...,l$.\\
Take a representative element $e(t)$ from each set $M_t$ (If
$M_t=\phi$, simply set $e(t)=0$). Since $V$ is a linear space, the
difference of two elements in the same $M_t$ belongs to $V$. Hence
\[M_1 \cup (\bigcup_{t=2}^l \{e - e(t) | e \in M_t\backslash e(t)\}
) \subseteq V.\] However all the elements in this set are linearly
independent. So
\[\textrm{dim }V \geq |M_1| + \sum_{t=2}^l |\{e - e(t) | e \in
M_t\}| \geq |M_1|+\sum_{t=2}^l (|M_t|-1) = m+n-l+1.\] Therefore,
\[l\geq C(n_j) +2 - \lceil\log_2n_j\rceil.\]

\subsection{The Upper Bound}
Suppose $K < \frac{2n_{i+1}-1}{n_{i+1}}$. Let $H$ and $B$ be the same as in \S 3.2.\\
Then \[|H| \leq \frac{K}{2n_j-1}|A| < \frac{1}{n_j-1}|A|.\] Note
that $A$ is contained in some linear space $F$
isomorphic to $\mathbb{F}_2^{C(n_j)}\times H $ and $\dim F = C(n_j)+\dim H$.\\
Let $l = \lfloor \log_2(n_j-1)\rfloor$ and let $H'$ be a linear
subspace of $F$ containing $H$ with $\dim H' = \dim H+ l$. Then
$|H'|<|A|$ and $F$ is the union of
\[2^{\dim F-l-\dim H}= 2^{m-l}=2^{C(n_j)-\lfloor\log_2(n_j-1)\rfloor}\] cosets of $H'$. Therefore,
\[G(K)\leq \max_{1\leq j\leq i}2^{P(n_j)-\lfloor\log_2(n_j-1)\rfloor} .\]

\section{Structure of Normal Sets}
In~\cite{K}, Kemperman describes the structure of subsets $A$, $B$
of an abelian group $G$ satisfying $|A+B|= |A|+|B|-1$. In
particular, if we set $A = B$ and $G = \mathbb{F}_2^n$, then
Kemperman's theorem gives the structure of normal sets.

In the language of~\cite{L}, V. Lev proved the following special
case for $G = \mathbb{F}_2^r$:
\begin{thm}[V. Lev~\cite{L}]
Let $r>1$ be an integer. If a subset $A \subseteq \mathbb{F}_2^r$
satisfies $|A+A|<2|A|$, then one of the following holds:\\
(i) there exists a subgroup $H \leq \mathbb{F}_2^r$ such that $A$ is
contained in an $H$-coset and $|A|>|H|/2$; \\
(ii)there exists two subgroups $F, H \leq \mathbb{F}_2^r$,
satisfying $|F|\leq 8$ and $F\cap H = \{0\}$, and an aperiodic
antisymmetric subset $F_1 \subseteq F$, such that $A$ is obtained
from a shift of the set $F_1+H$ by removing less than $|H|/2$ of its
elements. In this case $A+A$ is the sum $F\oplus H$ with one
$H$-coset removed, so that $|A+A|=(|F|-1)|H|$.

\end{thm}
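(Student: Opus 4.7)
The plan is to apply Kneser's theorem to $A+A$, use its stabilizer to pass to an aperiodic quotient, and then classify the aperiodic critical sets in $\mathbb{F}_2^s$ directly. Let $H$ denote the stabilizer of $A+A$, i.e., the largest subgroup with $(A+A)+H=A+A$. Kneser's theorem gives $|A+A|\geq 2|A+H|-|H|$, which combined with $|A+A|<2|A|$ rearranges to $|A+H|-|A|<|H|/2$. Let $k$ be the number of $H$-cosets meeting $A$, so $|A+H|=k|H|$. If $k=1$, then $A$ lies in a single $H$-coset with $|A|>|H|/2$, which is conclusion (i).

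Otherwise $k\geq 2$, and passing to $\bar A\subseteq \mathbb{F}_2^r/H$ yields a set of size $k$ with trivial stabilizer (by maximality of $H$) satisfying $|\bar A+\bar A|=|A+A|/|H|<2k$. A second application of Kneser's theorem in the quotient forces $|\bar A+\bar A|=2k-1$, so the problem reduces to classifying aperiodic subsets $B\subseteq\mathbb{F}_2^s$ with $|B+B|=2|B|-1$; showing that each such $B$ is, up to translation, an antisymmetric subset of a subgroup of order at most $8$ is the main obstacle, and amounts to the specialization of Kemperman's theorem to characteristic $2$. I would attack it by induction on $|B|$: after translating so $0\in B$, the representation function $r(x)=\#\{(a,b)\in B\times B:a+b=x\}$ has tightly constrained average under the critical equality, and aperiodicity rules out the degenerate configurations in which $r$ is too uniform. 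This should let me identify an element of $B$ whose removal preserves both hypotheses, triggering the inductive step, and tracking the induced linear relations should confine $B$ inside a subgroup $F$ of rank at most $3$. The antisymmetry of $F_1:=B\subseteq F$ would then be read off from the saturated Kneser equality, which forces a unique missing element of $F$ in $B+B$.

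To finish, I lift the quotient structure back. Choose a subgroup $F\leq\mathbb{F}_2^r$ with $F\cap H=\{0\}$ whose image in $\mathbb{F}_2^r/H$ contains $\bar A$, and let $F_1\subseteq F$ be the aperiodic antisymmetric set produced above so that $\bar A$ is a shift of $F_1$. Then $A\subseteq c+(F_1+H)$ for some $c\in\mathbb{F}_2^r$, and the number of omitted elements is $|F_1||H|-|A|=k|H|-|A|<|H|/2$ by the rearranged Kneser inequality. Finally, since $\bar A+\bar A=F\setminus\{f_0\}$ for the unique $f_0\in F$ forced missing by the critical equality in the quotient, we conclude $A+A=(F+H)\setminus(f_0+H)$ and hence $|A+A|=(|F|-1)|H|$, as claimed.
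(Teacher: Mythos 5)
The paper cites this theorem from Lev~\cite{L} and does not prove it, so there is no in-paper argument to compare against; I will assess your proposal on its own terms. Your Kneser reduction is correct at both ends: taking $H$ to be the stabilizer of $A+A$, rearranging Kneser's inequality to $|A+H|-|A|<|H|/2$, splitting on the number $k$ of $H$-cosets meeting $A$ (with $k=1$ giving conclusion (i) immediately), passing to the quotient $\mathbb{F}_2^r/H$ where a second application of Kneser with a now-trivial stabilizer forces $|\bar A+\bar A|=2|\bar A|-1$, and then lifting back by choosing a complement $F$ with $F\cap H=\{0\}$, checking that the number of omitted points is $k|H|-|A|<|H|/2$, and recovering $A+A=(F\oplus H)\setminus(f_0+H)$. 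That frame is the routine part of the result and you have it right.

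The genuine gap is that the entire content of the theorem --- the classification of aperiodic $B\subseteq\mathbb{F}_2^s$ with $|B+B|=2|B|-1$ --- is left as a plan rather than an argument. Your proposed induction requires an element $b\in B$ whose removal decreases the sumset by exactly two while preserving aperiodicity; nothing in your description produces such a $b$, and the remark that the representation function is ``tightly constrained'' is not a mechanism. More seriously, the target you set yourself, namely confining $B$ to a subgroup $F$ with $|F|\le 8$, cannot be reached because that bound is false as stated in the paper. The paper itself exhibits, for each $n\ge 2$, a normal set $T\subseteq\mathbb{F}_2^{n+1}$ with $|T|=2^n$ and $T+T=\mathbb{F}_2^{n+1}\setminus\{(1,\dots,1)\}$. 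This $T+T$ has trivial stabilizer, so in clause (ii) one is forced to $|H|=1$ (since $|T+T|$ is odd), hence $|F|=|T+T|+1=2^{n+1}$, which exceeds $8$ for every $n\ge 3$. The paper's own proof of Theorem 5.2 concedes this by computing $|F|=2^{R(n,T)}$ with $R(n,T)$ unbounded. So the ``$|F|\le 8$'' in the quoted statement must be a misstatement of Lev's result, whose actual structure theorem (being of Kemperman type) has a recursive/tower form that this flattened clause does not capture; any induction aimed at that literal bound is chasing something untrue.
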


Based on V. Lev's result, we can determine all normal numbers and
the corresponding ranks.
\begin{thm}
Let $n_i$ denote the $i$-th normal number. Then
\begin{itemize}
\item $n_i=2^{i+1} $;
\item $C(n_i)=i+2 $.
\end{itemize}
\end{thm}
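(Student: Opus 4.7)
The plan is to prove both claims of Theorem~5.2 simultaneously by strong induction on $i$, combining the construction of Section~2 (for existence and the rank lower bound) with Theorem~5.1 and a parity argument stemming from Kneser's theorem (for the structural upper bound). The base case $i=1$ asserts $n_1=4$ and $C(4)=3$: after translating an element of a normal set $T$ of level $4$ to $0$, write $T=\{0,u,v,w\}$; the equality $|T+T|=|\{0,u,v,w,u+v,u+w,v+w\}|=7$ forces $u+v+w\neq 0$, so $u,v,w$ are linearly independent and $T$ embeds in $\mathbb{F}_2^3$. Conversely, any four-element subset of $\mathbb{F}_2^2$ must contain all three nonzero vectors, giving $u+v+w=0$ and $|T+T|\leq 6$, so such an embedding is impossible in dimension~$2$; thus $C(4)=3$, and a short case analysis rules out $n\in\{5,6,7\}$.

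For the inductive step, existence of $2^{i+2}$ as a normal number together with the lower bound $C(2^{i+2})\geq i+3$ both come from the construction of Section~2 taken with parameter $k=i+2$: the resulting $T_{i+2}\subseteq\mathbb{F}_2^{i+3}$ has $|T_{i+2}|=2^{i+2}$ and $|T_{i+2}+T_{i+2}|=2^{i+3}-1$, and any Freiman $2$-isomorphic image inside $\mathbb{F}_2^{i+2}$ would be forced by cardinality to exhaust $\mathbb{F}_2^{i+2}$, yielding sumset size $2^{i+2}\neq 2^{i+3}-1$ and contradicting the $2$-isomorphism invariance of sumset cardinality.

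It remains to exclude normal numbers strictly between $n_i=2^{i+1}$ and $2^{i+2}$ and to prove $C(2^{i+2})\leq i+3$. Let $T$ be a normal set of level $n>n_i$ in its affine span $\mathbb{F}_2^r$. Kneser's theorem gives $|T+T|\geq 2|T|-|\mathrm{Stab}(T+T)|$, and since $T+T$ is a union of cosets of its stabilizer, $|\mathrm{Stab}(T+T)|$ divides $|T+T|=2n-1$; as stabilizers in $\mathbb{F}_2^r$ have $2$-power order, the stabilizer is trivial. Applying Theorem~5.1, case~(ii) is ruled out for $n>4$ (since $|T+T|=(|F|-1)|H|\leq 7$ when $|H|=1$), and in case~(i) the parity of $2n-1$ combined with the $2$-power constraint on $|H|$ pushes $T$ to the boundary $|T|=|H|/2$ for some index-$2$ subspace $H$. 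Choosing coordinates so $\mathbb{F}_2^r\cong\mathbb{F}_2\times H$, write $T=(\{0\}\times T_0)\cup(\{1\}\times T_1)$; the decomposition
\[
T+T=\bigl(\{0\}\times((T_0+T_0)\cup(T_1+T_1))\bigr)\cup\bigl(\{1\}\times(T_0+T_1)\bigr)
\]
combined with $|T+T|=2n-1$ and the trivial stabilizer forces a structure in which a canonical Freiman $2$-isomorphic image of a sub-piece of $T$ is a normal set $S$ of level $n/2$ inside $\mathbb{F}_2^{r-1}$. Applying induction to $S$ yields $n/2=2^{i+1}$, hence $n=2^{i+2}$, and simultaneously $R(n,T)\leq 1+R(n/2,S)\leq 1+(i+2)=i+3$, closing the induction.

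The principal obstacle is this last structural step: distilling a genuine normal set of level exactly $n/2$ from the index-$2$ split produced by Theorem~5.1. The parity and stabilizer constraints sharply cut down the possibilities, but the pieces $T_0,T_1$ need not themselves be normal, so one must identify a suitable canonical sub-piece and verify that it is Freiman $2$-isomorphic to a normal set of the correct smaller level. Tracking the unique missing point of $T+T$ through this reduction, and keeping the rank estimate tight, is where most of the work lies.
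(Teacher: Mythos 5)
Your proposal takes a genuinely different route than the paper, and it has a serious gap at the heart of the argument.

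The paper's proof is a one-step application of Lev's Theorem~5.1 to a normal set $T$ sitting inside $\mathbb{F}_2^{R(n,T)}$. Rank-minimality forces the relevant subgroup to be the full space; the odd parity of $|T+T|=2n-1$ kills every $2$-power factor, collapsing case~(ii) to $|H|=1$ and $F=\mathbb{F}_2^{R(n,T)}$, which yields $2n-1=2^{R(n,T)}-1$ directly. There is no induction on $i$; the rank $R(n_i,T)=\log_2 n_i+1$ falls out for every $T$ simultaneously, giving both claims at once. Your strong-induction framework, by contrast, needs to convert the structural information at level $n$ into a genuine normal set of level exactly $n/2$, and that conversion is precisely what you flag as ``the principal obstacle'' and never carry out. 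This is not a detail: for the model example of Section~2 (level $2^k$ inside $\mathbb{F}_2^{k+1}$), splitting $T$ by a coordinate produces pieces $T_0,T_1$ of sizes such as $7$ and $1$, or a piece sitting inside a two-dimensional flat --- none of which is a level-$n/2$ normal set, so the ``canonical sub-piece'' you gesture at does not exist in the naive form and would need an entirely different construction.

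There is a second, more basic problem: your handling of case~(i) of Theorem~5.1 does not match its hypotheses. Case~(i) requires $T$ to lie in a \emph{single} $H$-coset with $|T|>|H|/2$. Rank-minimality forces $H=\mathbb{F}_2^{R(n,T)}$ (otherwise $T$ embeds in a strictly smaller space, lowering the rank), so $H$ cannot be an index-$2$ subspace, and the subsequent decomposition $T=(\{0\}\times T_0)\cup(\{1\}\times T_1)$ across two $H$-cosets is incompatible with the case you are invoking. Moreover, ``$|T|>|H|/2$ pushed to $|T|=|H|/2$'' is not an available boundary under the stated strict inequality. The parts that do work --- the base case $n_1=4$, $C(4)=3$; the explicit construction showing $2^{i+2}$ is normal; and the cardinality/sumset argument giving $R\geq i+3$ for that construction --- are fine, but they are also the easy half. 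The half that needs Lev/Kemperman structure theory, namely that no normal numbers lie strictly between $2^{i+1}$ and $2^{i+2}$ and that \emph{every} normal set of level $2^{i+2}$ has rank at most $i+3$, is asserted but not proved. Dropping the induction and applying Theorem~5.1 once, in the minimal ambient dimension, is both shorter and avoids the unverified reduction step.
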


\begin{proof}
Let $T \subseteq \mathbb{F}_2^{R(n,T)}$ be a normal set of level
$n$.
Then $T$ satisfies the condition in Theorem 5.1.\\
If (i) holds, then $H$ must be $\mathbb{F}_2^{R(n,T)}$ itself.
However, in this case, $|T+T|=2|T|-1 >2^{R(n,T)}$,
a contradiction. \\
If (ii) holds, then $|T+T|=(|F|-1)|H|$. Note that $|H|$ is a power
of 2 and $|T+T|$ is an odd number. So, $|H|=1$. It follows that $T$
is contained in a shift of $F$ which must be the whole space
$\mathbb{F}_2^{R(n,T)}$. So $2n-1=|T+T|=|F|-1=2^{R(n,T)}-1$.
Thus, $n$ is a power of 2. \\
Recall that $2^n$ ($n\geq 2$) are normal numbers. We obtain $n_i=2^{i+1}$.\\
Furthermore, $R(n_i,T)=1+\log_2n_i=i+2$, which is independent of $T$.\\
Therefore, $C(n_i)=i+2$.
\end{proof}

\begin{proof}[Proof of Theorem 1.8]
 By Theorem 3.1, it suffices to compute
$\frac{2^{C(n_j)}}{2n_j-1}$. In fact,
\[\frac{2^{C(n_j)}}{2n_j-1}=\frac{2^{i+2}}{2^{i+2}-1} \leq \frac{2^3}{2^3-1}=\frac{8}{7}\]which implies that $F(K)=\frac{8}{7}K$ for $\frac{7}{4}\leq K<2$.
\end{proof}

\section{The Exact Value of $G(K)$}
For any $j\geq 1$, we have that $C(n_j)+2-\lceil \log_2 n_j
\rceil=j+2+2-(j-1)=3$, and that $C(n_j)-\lfloor\log_2(n_j-1)\rfloor
= (j+2)-j=2$. By Proposition 4.1, we have $3 \leq G(K)\leq 4$ for
$\frac{7}{4}\leq
K<2$.\\

\subsection{When $\frac{7}{4} \leq K < \frac{31}{16}$}

\begin{prop}
When $\frac{7}{4}\leq K<\frac{31}{16}$, $G(K)=3$.
\end{prop}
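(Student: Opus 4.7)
The plan is to combine the lower bound $G(K) \geq 3$ coming from Proposition 4.1 with a matching upper bound $G(K) \leq 3$. The lower bound is already recorded at the start of this section, so the work lies entirely in proving $G(K) \leq 3$ when $\frac{7}{4} \leq K < \frac{31}{16}$. I would start from the reduction of Section 3.2: let $H$ be the maximal subspace of $\mathbb{F}_2^n$ such that $A+A$ is a union of $H$-cosets, and set $B := (A+H)/H$. Then $B$ is normal with $|B+B| = 2|B|-1$, and $|H| \leq \frac{K}{2|B|-1}|A|$. Since $K < \frac{31}{16}$, the Kneser-based bound from Section 3.2 forces $|B| < n_{i+1} \leq 16$, so by Theorem 5.2, $|B| \in \{4, 8\}$. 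The two cases are treated separately.

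For $|B| = 4$, Theorem 5.2 places $B$ in a $3$-dimensional affine subspace of $\mathbb{F}_2^n/H$. I would pick any two distinct elements $b_1, b_2 \in B$ and set $v := b_1 - b_2$. In the quotient, $\{b_1, b_2\}$ is a single coset of $\langle v \rangle$, and the remaining two elements of $B$ occupy at most two further cosets, yielding a covering of $B$ by three cosets of $\langle v \rangle$. Lifting $v$ to $\tilde v \in \mathbb{F}_2^n$ and setting $V := \langle \tilde v \rangle + H$, the three corresponding cosets of $V$ cover $A$, and $|V| = 2|H| \leq \frac{2K}{7}|A| < |A|$ since $K < 2$.

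For $|B| = 8$, Theorem 5.2 places $B$ in a $4$-dimensional affine subspace $U \subseteq \mathbb{F}_2^n/H$, so the complement $\bar B := U \setminus B$ has exactly eight elements. The \emph{crucial} step is to show that $\bar B$ contains a $2$-dimensional affine flat, equivalently four distinct elements of $\bar B$ summing to zero. I would argue by contradiction: if no such flat existed, $\bar B$ would be a Sidon subset of $U \cong \mathbb{F}_2^4$, forcing $|\bar B + \bar B| \geq \binom{8}{2}+1 = 29 > 16 = |U|$, which is impossible. Let $P \subseteq \bar B$ be such a $2$-flat and let $W \leq \mathbb{F}_2^n/H$ be the parallel $2$-dimensional subspace. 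Since $P$ is a coset of $W$ sitting entirely inside $\bar B$, the other three cosets of $W$ cover $B$. Lifting $W$ to $\tilde W \leq \mathbb{F}_2^n$ and setting $V := \tilde W + H$, we obtain a covering of $A$ by three cosets of $V$, with $|V| = 4|H| \leq \frac{4K}{15}|A| < |A|$ since $K < 2$.

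The only non-routine ingredient is the Sidon-type pigeonhole used in the case $|B| = 8$; everything else is a straightforward refinement of the argument of Section 4.2 with a smarter choice of $V$. Combining both cases yields $G(K) \leq 3$, which together with the lower bound gives $G(K) = 3$ throughout $[\frac{7}{4}, \frac{31}{16})$.
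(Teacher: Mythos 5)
Your proof is correct and reaches the same conclusion as the paper, but the treatment of the $|B|=8$ case is handled by a genuinely different (and cleaner) argument. The paper normalizes so that $\{0,e_1,\dots,e_4\}\subseteq B\subseteq \mathbb{F}_2^4$, observes that the unique element $u\notin B+B$ has weight $3$ or $4$, and then asserts that ``it is straightforward to check'' that $B$ is covered by three cosets of a $2$-dimensional subspace; carried out honestly, this requires examining the finitely many possibilities for $B$ compatible with each choice of $u$, since knowing $u$ does not determine $B$. You replace this exhaustive check with a pigeonhole: the complement $\bar B=U\setminus B$ has $8$ elements, and a Sidon set in $\mathbb{F}_2^4$ has at most $6$ elements (since $\binom{k}{2}+1\le 16$ forces $k\le 6$), so $\bar B$ contains four distinct elements summing to zero, i.e.\ an affine $2$-flat $P$. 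The parallel $2$-dimensional subspace $W$ then has $P$ as one coset and the other three cover $B$, which lifts to a $3$-cover of $A$ by cosets of $\pi^{-1}(W)$ of size $4|H|\le\frac{4K}{15}|A|<|A|$. This buys you a uniform, non-enumerative proof of exactly the fact the paper leaves to the reader. The $|B|=4$ case and the overall reduction through $H$, $B$, Kneser, and the bound $|H|\le\frac{K}{2|B|-1}|A|$ are the same in both; your bookkeeping on $|V|$ is also correct in both cases. One small remark: your appeal to $C(4)=3$ in the $|B|=4$ case is unnecessary --- covering four points by three cosets of a line needs only the choice $v=b_1+b_2$, with no constraint on the ambient dimension.
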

\begin{proof} Let $H$ and $B$ be the same as in \S
3.2. In this case, we have $|B|\leq 8$.\\
If $|B|\leq 4$, then $A$ is contained in the union of no more than
four cosets of some subspace $H$, where $|H|<\frac{|A|}{2}$.
Covering these four cosets of $H$ by three cosets of a subspace of
one
dimension higher than $H$ we obtain $G(K)\leq 3$.\\
If $|B|=8$, under a proper linear transformation, we can assume that
$\{0, e_1, e_2, e_3, e_4\} \subseteq B$. Let $u$ denote the single
element of $\mathbb{F}_2^4 \backslash (B+B)$. Note that $0, e_i,
e_j+e_k \in B+B$ ($1\leq i,j,k \leq 4$). So $u$ contains three or
four 1's, Without loss of generality, we only consider $u=(1110)$ or
$(1111)$. In either case, it is straightforward to check that $B$
can be covered by three shifts of a linear space with four elements.
If follows that $A$ can always be covered by three cosets of a
linear subspace with cardinality no more than $|A|$.
\end{proof}

\subsection{When $\frac{31}{16}\leq K <2$}
\begin{prop}
When $\frac{31}{16}\leq K<2$, $G(K)=4$.
\end{prop}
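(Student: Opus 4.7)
The plan is to establish the matching lower bound $G(K) \ge 4$ for $K \in [31/16, 2)$, since the upper bound $G(K) \le 4$ was already deduced at the start of Section~6. I aim to exhibit a set $A$ with $\sigma(A) \le K$ that cannot be covered by three translates of any subspace $V$ with $|V| \le |A|$.

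The structural reformulation I would use is this: three translates of a subspace $V$ always form a ``dented'' affine subspace $V' \setminus W$, where $V'$ is an affine subspace with $[V':V] = 4$ (the degenerate case, where the three translates collapse to at most two, produces a single coset of size $2|V|$ and is already ruled out by $F(K) > 2$, since Theorem~1.8 gives $F(K) = 8K/7 \ge 31/14$ for $K \ge 31/16$). Hence showing that $A$ requires at least four translates amounts to showing that $A$ does not fit in any such $V' \setminus W$ with $|V| \le |A|$.

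The construction I would try is the one from Section~4.1 specialized to $j = 3$: take $T \subseteq \mathbb{F}_2^5$ a normal set of level $n_3 = 16$ with $R(16,T) = C(16) = 5$ (Theorem~5.2), arrange $\{0, e_1, \ldots, e_5\} \subseteq T$ after a linear change of coordinates, and form $A = T \times A' \subseteq \mathbb{F}_2^{n+5}$, where $A' \subseteq \mathbb{F}_2^n$ is a random subset of density close to $31/(16K)$ containing $\{0, e_6, \ldots, e_{n+5}\}$, possibly further twisted by a non-trivial graph $\{(t, \phi(t)) : t \in T\}$ in order to defeat the naive two-coset covering of the smallest affine subspace containing~$A$. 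Applying the linear-independence argument of Section~4.1 to the standard basis of $\mathbb{F}_2^{n+5}$ sitting inside $A$ immediately forces $\dim V \ge n + 3$ in any hypothetical three-coset cover, hence $\dim V' \ge n + 5$, pinning $V' = \mathbb{F}_2^{n+5}$. The coset $W$ is then a codimension-$2$ affine subspace of $\mathbb{F}_2^{n+5}$ which must be disjoint from $A$.

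The main obstacle lies in ruling out such a $W$: the basic argument has been exhausted in forcing $V'$ to fill the ambient space. The plan is to exploit the identity $\mathbb{F}_2^5 \setminus T = T + u$ (valid for every normal set of level $16$, since $T \cap (T + u) = \emptyset$ with matching cardinalities) together with a careful choice of $T$. For instance, one may take $T$ to be the set of vectors of Hamming weight at most~$2$ in $\mathbb{F}_2^5$; this is a normal set of level~$16$ with $u = (1,1,1,1,1)$, and a direct weight-counting argument shows that neither $T$ nor its complement $T + u$ contains any $3$-dimensional affine subspace of $\mathbb{F}_2^5$. Combined with the randomness of $A'$, which rules out any candidate $W$ mixing the $\mathbb{F}_2^5$ and $\mathbb{F}_2^n$ directions, this forces $W$ to project to a $3$-dimensional affine subspace contained in $\mathbb{F}_2^5 \setminus T$, contradicting the choice of $T$.
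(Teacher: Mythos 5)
The construction you propose is essentially the paper's (the paper also uses $A=T\times A'$ with $T\subseteq\mathbb{F}_2^5$ the vectors of Hamming weight $\le 2$, $A'\subseteq\mathbb{F}_2^n$ a dense ``random'' set containing the standard basis), and your dimension count forcing $\dim V\ge n+3$ and hence $V'=\mathbb{F}_2^{n+5}$ is also the one that underlies the paper's argument. But at the decisive step --- ruling out the forbidden coset $W=v_1+v_2+V$ --- you and the paper part ways, and your version is where the gap lies.

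The paper finishes cleanly: after the dimension count forces each of the two nontrivial cosets $v_1+V$ and $v_2+V$ to contain some basis vector, it picks $e_i\in v_1+V$, $e_j\in v_2+V$ and observes that $e_i+e_j$ lies in $v_1+v_2+V$, which the covering hypothesis excludes from $A$; yet the construction is arranged so that $e_i+e_j\in A$ (for indices both in the $T$-part it has weight $\le 2$, for mixed indices it is automatic, and for indices both in the $A'$-part one simply builds the pairwise sums into $A'$). That single element $e_i+e_j$ is the whole proof. Your plan instead tries to show outright that no codimension-$2$ coset $W$ can be disjoint from $A$, which is a strictly stronger claim and requires a case analysis you do not carry out: $W$ is a coset of $V$, and depending on $\dim\pi(V)$ (where $\pi:\mathbb{F}_2^{n+5}\to\mathbb{F}_2^5$ is the projection), the fibres $W_x$ are cosets of an index-$1$, $2$, or $4$ subgroup of $\mathbb{F}_2^n$. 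The phrase ``the randomness of $A'$ rules out any candidate $W$ mixing the directions'' papers over this: what is actually needed (and does work) is a density argument --- $A'$ has density $>31/32>3/4$, hence meets every coset of index $\le 4$ --- in the cases $\dim\pi(V)\in\{4,5\}$, together with the observation that when $\dim\pi(V)=4$ the $16$-element image $\pi(W)$ would have to equal $\mathbb{F}_2^5\setminus T$, which is not an affine subspace since $|T+T|=31\ne 16$. Only the residual case $\dim\pi(V)=3$ reduces to the $3$-dimensional-affine-subspace claim you invoke, and that claim itself (``a direct weight-counting argument shows'') is asserted rather than proved. So the route you sketch can be made to work, but as written it has a genuine hole exactly where the paper's one-line $e_i+e_j$ trick does the job; you should either supply the case analysis and the weight-counting proof, or (better) notice that $e_i+e_j$ already lands in $W\cap A$ and short-circuits the whole discussion.
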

\begin{proof}
It suffices to prove that $G(K)\geq 4$.\\
Let $T \subseteq \mathbb{F}_2^5$ be the set of all 5-tuples with no
more than two 1's. Then $T$ is a normal set of level 5. Let $A$ be
the Cartesian product (in $\mathbb{F}_2^n \times \mathbb{F}_2^5 =
\mathbb{F}_2^{n+5}$) of a random subset $A'$ in $\mathbb{F}_2^n$ of
density close to $\frac{31}{16K}$  together with the set
$T$. (When choosing $A$, let $\{0,e_1,...,e_n\} \in A'$)\\
Now suppose $A$ is covered by the union of $V$, $v_1+V$ and $v_2+V$
where $V$ is a linear subspace with $|V| \leq |A|$.  Then there
exists $i,j\in \{1,2,...,n+5\}$ such that $e_i \in v_1+V$ and $e_j
\in v_2+V$ (Otherwise, $V$ contains $n+4$ linearly independent
elements). However, $e_i+e_j$ is contained in $v_1+v_2+V$ which is
disjoint from $A$, a contradiction.
\end{proof}

This completes the proof of Theorem 1.9.

\section{A Complete Classification of Small Doubling sets}

Following the discussion in \S 3 and \S 5, we have
\begin{cor}
For any normal set $T$, there exists an $m \in \mathbb{Z}^+$ and a
normal set $T' \subseteq \mathbb{F}_2^m$ such that
\begin{itemize}
\item $T' \cong_2 T$;
\item $|T|=2^{m-1}$;
\item $\{ 0, e_1, e_2, ..., e_m \} \subseteq T'$.
\end{itemize}
\end{cor}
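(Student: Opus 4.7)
My plan is to derive the three bullets in sequence, using Theorem 5.2 to pin down the correct value of $m$ and then using translations and linear automorphisms---both of which are Freiman $2$-isomorphisms---to normalize the model $T'$.

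First, set $n := |T|$ and $m := \log_2 n + 1$. Since $T$ is a normal set, Theorem 5.2 forces $n = 2^{i+1}$ for some $i$ and $R(n, T) = C(n) = i+2$, hence $m = R(n, T)$ and $|T| = 2^{m-1}$. By the very definition of rank, there exists $T_0 \subseteq \mathbb{F}_2^m$ with $T_0 \cong_2 T$; this already secures the first two bullets, regardless of any further manipulation that preserves Freiman $2$-isomorphism.

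Second, I translate to place the origin inside the set. Translation by a fixed vector is a Freiman $2$-isomorphism of $\mathbb{F}_2^m$, so after replacing $T_0$ by $T_0 - t$ for some $t \in T_0$ I may assume $0 \in T_0$. Third, I apply a linear automorphism of $\mathbb{F}_2^m$. The key claim is that $T_0$ spans $\mathbb{F}_2^m$ as an $\mathbb{F}_2$-vector space: if instead $T_0 \subseteq W$ for some proper linear subspace $W$, then fixing a linear isomorphism $W \xrightarrow{\sim} \mathbb{F}_2^{\dim W}$ would realize $T_0$ as a subset of $\mathbb{F}_2^{m-1}$ that is still Freiman $2$-isomorphic to $T$, contradicting $R(n, T) = m$. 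Hence $T_0$ contains $m$ linearly independent vectors $v_1, \dots, v_m$, and the unique $\phi \in \mathrm{GL}_m(\mathbb{F}_2)$ sending $v_i \mapsto e_i$ is a Freiman $2$-isomorphism that fixes $0$. The set $T' := \phi(T_0)$ then contains $\{0, e_1, \dots, e_m\}$ and still satisfies $T' \cong_2 T$ with $|T'| = 2^{m-1}$.

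The only nontrivial step is the spanning claim, which reduces to the equality $R(n, T) = m$ already extracted from V.~Lev's theorem via Theorem 5.2; everything else is bookkeeping about how translations and linear automorphisms interact with Freiman $2$-isomorphism. No extra combinatorics beyond what Theorem 5.2 supplies is needed.
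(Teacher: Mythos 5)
Your proof is correct and makes explicit the argument the paper leaves to the reader as ``following the discussion in \S 3 and \S 5.'' The key facts you invoke are exactly the ones the paper relies on: Theorem 5.2 shows that $R(n,T)$ equals $1+\log_2 n$ for every normal set $T$ of level $n$ (so $R(n,T)=C(n)$ and $m$ is forced), and then translations and linear automorphisms --- both Freiman $2$-isomorphisms --- normalize the model to contain $\{0,e_1,\dots,e_m\}$. The spanning step is the right way to justify the normalization: if a translate of $T_0$ containing $0$ lay in a proper linear subspace $W$, restricting a linear isomorphism $W\to\mathbb{F}_2^{\dim W}$ would give a Freiman $2$-isomorphic copy in a smaller ambient space, contradicting the minimality of the rank. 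This is the same mechanism the paper uses implicitly in \S 4.1 when it says ``under a proper linear transformation we may assume $\{0,e_1,\dots,e_m\}\subseteq T$.'' No gap.
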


We call such sets $T'$ \emph{elementary normal sets}.\\
Combining Corollary 7.1 and the discussion in \S 3.2, we can
describe the structure of small doubling sets in the following way:
\begin{prop}
Suppose $A$ is a subset of $\mathbb{F}_2^n$ with $\sigma(A)<2$. Then
there exists an integer $m \leq \lfloor
\log_2(\frac{2}{2-\sigma(A)})\rfloor$, an elementary normal set $T$
of level $2^{m-1}$ and another integer $k \in \mathbb{Z}^+$ such
that $A$ is Freiman 2-isomorphic to a random subset of $T \times
\mathbb{F}_2^k$ with density $\frac{2^m-1}{2^m\sigma(A)}$.
\end{prop}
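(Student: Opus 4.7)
The plan is to combine the Kneser-type argument of \S 3.2 with the rigidity statement of Corollary 7.1, and then lift a Freiman 2-isomorphism on the quotient $B=(A+H)/H$ to one between $A$ itself and a subset of a product $T\times\mathbb{F}_2^k$.

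First, I would rerun the construction of \S 3.2: let $H$ be the largest subspace of $\mathbb{F}_2^n$ such that $A+A$ is a union of cosets of $H$, and set $B:=(A+H)/H\subseteq\mathbb{F}_2^n/H$. Under the hypothesis $\sigma(A)<2$ the Kneser argument already given in \S 3.2 yields $|B+B|=2|B|-1$, so $B$ is a normal set; Theorem 5.2 then gives $|B|=2^{m-1}$ for some positive integer $m$, and Corollary 7.1 furnishes an elementary normal set $T\subseteq\mathbb{F}_2^m$ together with a Freiman 2-isomorphism $\phi:B\to T$. The bound $m\leq\lfloor\log_2(2/(2-\sigma(A)))\rfloor$ would come from the estimate $|A|\leq|B|\,|H|=2^{m-1}|H|$ (the fibres of $\pi:\mathbb{F}_2^n\to\mathbb{F}_2^n/H$ have size $|H|$) combined with the identity $\sigma(A)|A|=|B+B|\,|H|=(2^m-1)|H|$, which together force $\sigma(A)\geq 2-1/2^{m-1}$.

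The core new step is to lift $\phi$ to $A$. I would fix a subspace $W\leq\mathbb{F}_2^n$ complementary to $H$, so that $\mathbb{F}_2^n=W\oplus H$ and $\pi$ restricts to an isomorphism $W\to\mathbb{F}_2^n/H$; under this identification $B$ sits inside $W$, and every element of $\pi^{-1}(B)=B+H\supseteq A$ has a unique decomposition $b+h$ with $b\in B$ and $h\in H$. With $k:=\dim H$, I would define
\[
\Phi:B+H\to T\times\mathbb{F}_2^k,\qquad \Phi(b+h):=(\phi(b),h),
\]
and verify that $\Phi$ is a Freiman 2-isomorphism: the splitting makes an equation $(b_1+h_1)+(b_2+h_2)=(b_3+h_3)+(b_4+h_4)$ in $\mathbb{F}_2^n$ equivalent to the conjunction of $b_1+b_2=b_3+b_4$ in $W$ and $h_1+h_2=h_3+h_4$ in $H$, and applying $\phi$ on the first coordinate and the identity on the second preserves such pairs in both directions because $\phi$ is itself a Freiman 2-isomorphism. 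Restricting $\Phi$ to $A$ produces the desired Freiman 2-isomorphic copy of $A$ inside $T\times\mathbb{F}_2^k$, and its density $|A|/(|T|\,|H|)$ matches the formula in the proposition after substituting $|H|=\sigma(A)|A|/(2^m-1)$.

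The principal obstacle I anticipate is the choice of $W$ coherent enough that the product structure on $B+H\subseteq\mathbb{F}_2^n$ transports faithfully to the product $T\times\mathbb{F}_2^k$ and the restriction $\Phi|_B$ really coincides with $\phi$; once this is set up, the verification of the Freiman 2-isomorphism property is formal, and the low-$m$ degenerate cases (where $|B|=1$ and $T$ is a point) are absorbed without extra work.
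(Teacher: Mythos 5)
Your argument is essentially the paper's, but carried out with far more care: the paper's proof of Proposition 7.2 consists of four terse sentences and never actually constructs a Freiman 2-isomorphism from $A$ to a subset of $T\times\mathbb{F}_2^k$ — it simply asserts $B\cong_2 T$, $H\cong\mathbb{F}_2^k$, and then talks about densities. The lifting step you carry out — choosing a complement $W$ of $H$, writing every element of $B+H\supseteq A$ uniquely as $b+h$, and checking that $\Phi(b+h)=(\phi(b),h)$ is a Freiman 2-isomorphism because additive relations split coordinate-wise along $W\oplus H$ — is precisely the content needed to make the paper's claim rigorous, and your verification of both $\Phi$ and $\Phi^{-1}$ being 2-homomorphisms is correct.

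One small caution: your computed density is $\frac{|A|}{|T|\,|H|}=\frac{2^m-1}{2^{m-1}\sigma(A)}$, which does not literally equal the $\frac{2^m-1}{2^m\sigma(A)}$ written in the proposition, so the phrase ``matches the formula in the proposition'' is too generous. Your formula is in fact the one that is internally consistent: requiring density $\leq 1$ with $\frac{2^m-1}{2^{m-1}\sigma(A)}$ yields exactly $\sigma(A)\geq 2-2^{1-m}$ and hence $m\leq\lfloor\log_2(2/(2-\sigma(A)))\rfloor$, whereas the paper's printed density gives no constraint on $m$ at all; the paper (which also writes ``level $2^m$'' in the proof versus ``level $2^{m-1}$'' in the statement) appears to have a normalization slip here. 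You should state your density explicitly rather than defer to the proposition's expression. A second, minor point: for the degenerate cases, the maximality of $H$ rules out $|B|\in\{2,3\}$ (since in $\mathbb{F}_2^n$ a set of size $2$ or $3$ has $B+B$ equal to a nontrivial subgroup), so the only non-normal case is $|B|=1$, i.e.\ $\sigma(A)=1$; it would be worth saying this rather than gesturing at ``absorbed without extra work,'' since ``elementary normal set of level $1$'' strains Definition 2.1.
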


\begin{proof}
Since $B$ is a normal set, it is 2-isomorphic to an elementary
normal set $T$ of level $2^m$. \\
$H$ is a subspace of $\mathbb{F}_2^n$. So $H$ is isomorphic to
$\mathbb{F}_2^k$ for some $k$. It is easy to compute that the
density of the random subset in $T\times \mathbb{F}_2^k$ is
$\frac{2^m-1}{2^m\sigma(A)}$. Since densities cannot exceed 1, we
have $\frac{2^m-1}{2^m\sigma(A)} \leq 1$. Thus, $m \leq \lfloor
\log_2(\frac{2}{2-\sigma(A)})\rfloor$.
\end{proof}

Therefore, we intrinsically classify all finite binary set with
doubling constant $\sigma(A)<2$. Furthermore, we conclude that:
\begin{princ}
When studying the linear structure of small doubling binary sets in
the sense of $\sigma(A)<2$, it suffices to consider elementary
normal sets.
\end{princ}

\textit{Remark}: The discussion above gives us a general idea for
studying the exact value of $F(K)$ and $G(K)$ when $K\geq 2$: we can
try to give definitions of \emph{generalized normal sets} and study
the linear structure of them. At least, the following conjecture is
reasonable.

\begin{conj}For $K\geq 1$,
\begin{itemize}
\item $F(K)$ is a piecewise linear function;
\item $G(K)$ is a piecewise constant function.
\end{itemize}
\end{conj}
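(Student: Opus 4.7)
The plan is to mimic and generalize the three-step strategy that proved Theorems 1.8 and 1.9, replacing the role of normal sets ($|T+T|=2|T|-1$) by a hierarchy of \emph{generalized normal sets} tailored to each regime of $K$. Concretely, for each rational ratio $\rho \geq 2$ at which the structure of sumsets jumps, I would call $T \subseteq \mathbb{F}_2^m$ a generalized normal set of level $(|T|, |T+T|)$ when $|T+T|/|T|$ falls in the relevant interval and $T+T$ is not a union of cosets of any non-trivial subspace. By analogy with Corollary 7.1, these sets should serve as the canonical building blocks of small-doubling sets, with a corresponding notion of rank $C(n)$ tracking the maximum dimension of a host space.

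The key steps, in order, would be the following. First, apply Kneser's theorem to $A$ to extract a subspace $H$ such that $A+A$ is a union of $H$-cosets, pass to the quotient $B=(A+H)/H$, and obtain a bound on $|B+B|/|B|$ depending only on $K$. Second, classify the possible quotients $B$ up to Freiman $2$-isomorphism via a Kemperman/Lev-type structure theorem, showing that only finitely many isomorphism classes occur in each bounded range of $K$. Third, conclude as in \S 3.2 and \S 4.2 that $F(K) = \max_{j} c_j K$ and $G(K) = \max_{j} g_j$, where the maxima range over a finite family of generalized normal sets. Since each contribution to $F$ is linear in $K$ and each contribution to $G$ is independent of $K$, the piecewise-linear and piecewise-constant structure would follow automatically, with breakpoints at the finitely many values $\rho = |T+T|/|T|$ attained by generalized normal sets.

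The main obstacle, and the reason this remains a conjecture, lies in the second step: a clean Kemperman--Lev classification of sets $B \subseteq \mathbb{F}_2^m$ with $|B+B| \leq K|B|$ is currently available only for small $K$ (compare Theorem 5.1, which requires $|B+B| < 2|B|$). Once $K \geq 2$, extremal configurations can consist of unions of several cosets glued together in increasingly intricate ways, and ruling out accumulation points in the set of admissible ratios $|B+B|/|B|$ becomes a genuine combinatorial difficulty. A plausible line of attack is to combine Ruzsa-type inequalities with induction on $\lceil K \rceil$, showing that each additional unit of doubling introduces only finitely many new extremal configurations; once such a finiteness statement is established, the piecewise structure of $F(K)$ and $G(K)$ would follow from precisely the Kneser-quotient argument of \S 3.2 and \S 4.2.
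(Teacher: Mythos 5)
This statement is a \emph{conjecture} in the paper; the paper offers no proof, only the preceding remark that one might define ``generalized normal sets'' and study their structure. Your proposal is, in essence, an elaboration of that same remark, and you are honest that it is a plan of attack rather than a proof: you explicitly flag the missing Kemperman--Lev-type classification for $K \geq 2$ as the obstruction. So there is nothing here to compare against a paper proof, and your self-assessment is accurate --- the second step of your plan is exactly where the argument is open.

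One point worth sharpening, because it shows the gap is somewhat deeper than ``no classification theorem yet.'' The engine of the $K<2$ argument in \S 3.2 is Kneser's theorem: from $|A+A| \geq 2|A| - |H|$ with $H$ the period of $A+A$, the hypothesis $K<2$ forces $|H| \geq (2-K)|A| > 0$, and in the quotient $B=(A+H)/H$ one is squeezed to $|B+B| = 2|B|-1$ exactly, which is what caps $|B|$ at $n_{i+1}-1$ and yields finitely many Freiman-isomorphism classes. Once $K \geq 2$ the inequality $|H| \geq (2-K)|A|$ is vacuous, so $H$ may be trivial, $B$ may be $A$ itself, and neither $|B|$ nor the structure of $B+B$ is constrained by this route. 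Thus it is not merely that the analogue of Theorem~5.1 is unavailable; the very mechanism you propose to invoke (``Kneser quotient, then classify the finitely many quotients'') loses its force at $K=2$. Any successful proof will need a genuinely new input to recover finiteness --- your suggestion of Ruzsa-type inequalities plus induction on $\lceil K \rceil$ is a reasonable direction, but as stated it is a heuristic, and ruling out accumulation points among the ratios $|T+T|/|T|$ of putative generalized normal sets is precisely the content of the conjecture rather than a tool one can assume.
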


\section*{Acknowledgements}
The author would like to thank professor Ben Green and Catherine
Lennon for many useful suggestions. The work of this paper was done
during the Summer Program of Undergraduate Research (SPUR) at MIT in
July 2007.

\bigskip

\footnotesize\textsc{Department of Mathematics, Massachusetts
Institute of Technology, Cambridge, MA 02139}

\end{document}